\newtheorem{theorem}{Theorem}
\newtheorem{proposition}[theorem]{Proposition}
\newtheorem{lemma}[theorem]{Lemma}
\begin{document}
\title{The determinant on flat conic surfaces with excision of disks}
\author{David A. Sher}
\address{McGill University/ Centre de Recherches Math\'ematiques}
\email{david.sher@mail.mcgill.ca}

\begin{abstract} Let $M$ be a surface with conical singularities, and consider a family of surfaces $M_{\epsilon}$ obtained from $M$ by removing disks of radius $\epsilon$ around a subset of the conical singularities. Such families arise naturally in the study of the moduli space of flat metrics on higher-genus surfaces with boundary. In particular, they have been used by Khuri to prove that the determinant of the Laplacian is not a proper map on this moduli space when the genus $p\geq 1$ \cite{kh}. Khuri's work is closely related to the isospectral compactness results of Osgood, Phillips, and Sarnak. Our main theorem is an asymptotic formula for the determinant of $M_\epsilon$ as $\epsilon$ approaches zero up to terms which vanish in the limit. The proof uses the determinant gluing formula of Burghelea, Friedlander, and Kappeler along with an observation of Wentworth on the asymptotics of Dirichlet-to-Neumann operators. We then apply this theorem to extend and sharpen the results of Khuri.
\end{abstract}

\maketitle

\section{Introduction}

\renewcommand{\thefootnote}{\fnsymbol{footnote}} 
\footnotetext{\emph{Mathematics Subject Classification:} 58J50, 58J52.}     
\renewcommand{\thefootnote}{\arabic{footnote}} 

Let $(M,g)$ be a compact connected surface with finitely many
isolated conical singularities $P_{i}$ of
cross-section $\alpha_{i}S^{1}_\theta$. In particular, 
in a neigborhood of each $P_{i}$, we assume that 
$(M,g)$ is isometric to $(0,3/2)_r\times S^1_\theta$ with the metric
\[dr^{2}+r^{2}\alpha_{i}^{2}d\theta^{2}\]
for some constant $\alpha_{i}>0$.
We say that
$2\pi\alpha_{i}$ are the \emph{cone angles}; note that when $\alpha_{i}=1$,
the conic singularity at $P_{i}$ becomes smooth.

Choose a nonempty subset $\{P_1, P_2, \ldots, P_k\}$ of the conical singularities. For each $i$ with $1\leq i\leq k$, fix a real number $b_i\in(0,1]$. For each $\epsilon\in (0,1]$, let $\Gamma_{\epsilon,i}$ be the
curve $r=b_i\epsilon$, and let $\Gamma_\epsilon$ be the union of all $\Gamma_{\epsilon,i}$. 
The region inside $\Gamma_{\epsilon,i}$ is a cone
of angle $2\pi\alpha_i$ and length $b_i\epsilon$, which we call 
$C_{\alpha_i,b_i\epsilon}$. We then let $M_{\epsilon}$ be the connected manifold
with boundary obtained by removing all of the $C_{\alpha_i,b_i\epsilon}$ from $M$; its boundary is precisely $\Gamma_\epsilon$. Finally, let
\[\beta=\frac{1}{k}\sum_{j=1}^kb_1\ldots\hat b_j\ldots b_k,\]
where the hat symbol indicates that the $j$th factor is omitted from the product. 

Our main theorem is the following asymptotic expansion:
\begin{theorem}\label{truncated} Let
$M_{\epsilon}$ be as defined above.
As $\epsilon\rightarrow 0$, we have
\[\log\det \Delta_{M_{\epsilon}}=(k-1+\sum_{i=1}^k\frac{1}{6}(\alpha_i+\frac{1}{\alpha_i}))\log\epsilon+(k-1)\log\log(1/\epsilon)+\log\det\Delta_M\]
\begin{equation}\label{second}-\sum_{i=1}^k
\log\det\Delta_{C_{\alpha_i,1}}
+k\log 2-\log V+\sum_{i=1}^k\frac{1}{6}(\alpha_i+\frac{1}{\alpha_i})\log b_i+\log\beta+o(1).
\end{equation}
Here we are imposing Dirichlet boundary conditions at all boundaries and the Friedrichs extension at all conic singularities. We also ignore the zero eigenvalue of $\Delta_M$ for the purposes of defining its determinant.
\end{theorem}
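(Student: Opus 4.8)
The plan is to apply the Burghelea--Friedlander--Kappeler gluing formula to the hypersurface $\Gamma_\epsilon$, which cuts $M$ into the two pieces $M_\epsilon$ and the disjoint union of the small cones $C_{\alpha_i,b_i\epsilon}$. Writing $R_\epsilon$ for the associated Dirichlet-to-Neumann (jump) operator on $\Gamma_\epsilon$, the gluing formula expresses $\det\Delta_M$ as an explicit constant times the product of $\det\Delta_{M_\epsilon}$ (Dirichlet on $\Gamma_\epsilon$), the cone determinants $\det\Delta_{C_{\alpha_i,b_i\epsilon}}$ (Dirichlet on $\Gamma_\epsilon$, Friedrichs at the tips), and $\det R_\epsilon$; solving for $\log\det\Delta_{M_\epsilon}$ isolates the quantity of interest. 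Since $M$ is closed its constants are harmonic, so $\Delta_M$ has a kernel and $R_\epsilon$ a corresponding null direction; first I would pass to the version of the gluing formula that excises the zero mode. The cost is a factor governed by the $L^2$-norm of the unnormalized harmonic zero mode, namely $\mathrm{Area}(M)$, and this is the source of the $-\log V$ term.

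The cone determinants are handled by exact scaling. Since $C_{\alpha_i,b_i\epsilon}$ is obtained from the unit cone $C_{\alpha_i,1}$ by dilating the metric by $(b_i\epsilon)^2$, the scaling anomaly gives $\log\det\Delta_{C_{\alpha_i,b_i\epsilon}}=\log\det\Delta_{C_{\alpha_i,1}}-2\zeta_{C_{\alpha_i,1}}(0)\log(b_i\epsilon)$. I would then read off $\zeta_{C_{\alpha_i,1}}(0)$ from the constant term of the heat trace: the interior is flat, the conic tip contributes $\tfrac{1}{12}(\tfrac{1}{\alpha_i}-\alpha_i)$, and the Dirichlet boundary circle (geodesic curvature $1$, length $2\pi\alpha_i$) contributes $\tfrac{\alpha_i}{6}$, while the operator has no kernel; hence $\zeta_{C_{\alpha_i,1}}(0)=\tfrac{1}{12}(\alpha_i+\tfrac{1}{\alpha_i})$. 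This single computation produces the $-\sum_i\log\det\Delta_{C_{\alpha_i,1}}$ term together with the matching coefficients $\tfrac16(\alpha_i+\tfrac1{\alpha_i})$ of both $\log\epsilon$ and $\log b_i$.

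Everything remaining must come from the asymptotics of $\log\det R_\epsilon$, and this is the heart of the proof. Decomposing into Fourier modes on each $\Gamma_{\epsilon,i}=\alpha_iS^1$, I would treat the nonzero and the constant modes separately. For a nonzero mode $n$ both the interior cone operator and the exterior operator equal $|n|/(\alpha_i b_i\epsilon)$ to all relevant orders, so their sum doubles; invoking Wentworth's observation to control the exterior Dirichlet-to-Neumann operator, I would compute the zeta-regularized determinant of this part, the doubling being responsible for the $k\log 2$ term. The constant modes are the delicate point: the interior cone operator annihilates them, since the regular harmonic extension into a cone is constant, so on this $k$-dimensional space $R_\epsilon$ reduces to the exterior capacitance form. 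That form is a rank $k-1$ Dirichlet-energy form of size $1/\log(1/\epsilon)$, together with the genuine null direction already removed above. Computing the product of its $k-1$ nonzero eigenvalues against the $L^2$ Gram matrix of circle-lengths $2\pi\alpha_ib_i\epsilon$ yields a factor $\epsilon^{-(k-1)}(\log(1/\epsilon))^{-(k-1)}$, producing exactly the $(k-1)\log\epsilon$ and $(k-1)\log\log(1/\epsilon)$ terms.

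The main obstacle is assembling the finite part. The constant-mode block, the regularized nonzero-mode determinant, and the universal gluing constant each carry $\alpha_i$- and $b_i$-dependent constants, and the clean appearance of $\beta$, which involves only the $b_i$, reflects a cancellation of all cone-angle dependence between the capacitance constant and the $1/\alpha_i$ factors in the nonzero modes. Verifying this cancellation and extracting $\log\beta$ --- equivalently, keeping the error terms in the Dirichlet-to-Neumann expansion uniform enough that the subleading $O(1/\log(1/\epsilon))$ corrections to the $k-1$ small eigenvalues do not survive in the limit --- is where the careful analysis, and Wentworth's estimate, will be essential.
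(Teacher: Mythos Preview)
Your plan is essentially the paper's own proof: BFK gluing along $\Gamma_\epsilon$, exact scaling of the cone determinants using $\zeta_{C_{\alpha,1}}(0)=\tfrac{1}{12}(\alpha+\alpha^{-1})$, and a Wentworth-style Fourier analysis of $\det R_\epsilon$ separating constant from nonconstant modes. The paper organizes the last step by passing to $\epsilon R_\epsilon=A_\epsilon+K_\epsilon$ with $A_\epsilon$ an explicit diagonal model ($-2B^{-1}|\mathcal V|$ on nonzero modes, $-B^{-1}/\log(1/\epsilon)$ on $\mathcal C$), proving $K_\epsilon$ trace class with $\|K_\epsilon A_\epsilon^{-1}\|\to 0$ (the precise form of the perturbation estimate you anticipate), and then recovering $\det R_\epsilon$ via the separately established identity $\zeta_{R_\epsilon}(0)=-1$; one small correction to your finite-part sketch is that $\log\beta$ is not a cross-mode cancellation but arises entirely from the constant block, as the log-determinant of $\Pi_{\mathcal C}\bar B\,\Pi_{\mathcal C}$ with $\bar B$ multiplication by $b_i$, and no $\alpha_i$ enters there at leading order.
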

Each term in the expansion except $\log\det\Delta_M$, $\log\det\Delta_{M_\epsilon}$, and $\log V$ may be computed explicitly as a function of the cone angles $\alpha_i$ and the constants $b_i$. In particular, Spreafico has computed a formula for
$\log\det\Delta_{C_{\alpha,1}}$ \cite{s}. The error is only $o(1)$ in general, but in the $k=1$ case there is an explicit upper bound:
\begin{theorem}\label{bettererror} When $k=1$, $b_1=1$, and $\epsilon\leq 2^{-\alpha_1}$, the error in (\ref{second}) is bounded by $6\epsilon^{1/\alpha_1}.$
\end{theorem}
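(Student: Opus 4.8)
My plan is to retrace the proof of Theorem~\ref{truncated} in the special case $k=1$, $b_1=1$, keeping every estimate effective. Write $\alpha=\alpha_1$, set $\rho=3/2$, and let $P=\alpha^{-1}|D_\theta|$ denote the first order operator on the single boundary circle $\Gamma_\epsilon$, so that $P$ has eigenvalue $|n|/\alpha$ on $e^{in\theta}$. With a single boundary component there is no capacity contribution (and hence no $\log\log$ term), so I expect every ingredient of the gluing argument except $\log\det{}'R_\epsilon$ to be \emph{exact}: the excised cone scales exactly, $\log\det\Delta_{C_{\alpha,\epsilon}}=\log\det\Delta_{C_{\alpha,1}}-2\zeta_{C_{\alpha,1}}(0)\log\epsilon$; the BFK local constant and the zero mode factor producing $\log V$ are explicit; and the leading part of the gluing operator contributes an exact term. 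The first step is therefore to show that the entire error of~(\ref{second}) equals $\pm\log\det(I+X_\epsilon)$ for a single correction operator, reducing the whole problem to bounding one relative determinant.

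To exhibit that operator I would use that near $P_1$ the surface is an exact cone, so the interior Dirichlet-to-Neumann operator is known exactly, $\mathcal N_{C_{\alpha,\epsilon}}=\epsilon^{-1}P$, while the gluing operator is $R_\epsilon=\mathcal N_{C_{\alpha,\epsilon}}+\mathcal N_{M_\epsilon}$. Introduce the self-adjoint operator on the nonzero Fourier modes
\[
B_\epsilon:=\tfrac{\epsilon}{2}\,P^{-1/2}R_\epsilon P^{-1/2}=\tfrac12+\tfrac{\epsilon}{2}\,P^{-1/2}\mathcal N_{M_\epsilon}P^{-1/2}.
\]
Since $B_\epsilon-I$ is trace class, the anomaly-free relative determinant identity (in the spirit of Wentworth's analysis of Dirichlet-to-Neumann operators) gives $\log\det{}'R_\epsilon=\log\det{}'(2\epsilon^{-1}P)+\log\det B_\epsilon$, and a short computation with $\zeta_R(0)=-\tfrac12$, $\zeta_R'(0)=-\tfrac12\log 2\pi$ yields $\log\det{}'(2\epsilon^{-1}P)=\log(\pi\alpha\epsilon)$, which, together with the exact cone-scaling and gluing constants, assembles into the main terms of~(\ref{second}). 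Thus the error is $\log\det B_\epsilon=\operatorname{tr}\log B_\epsilon$, and it remains to bound this.

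The heart of the matter, and the step I expect to be the genuine obstacle, is to control $\mathcal N_{M_\epsilon}$ from both sides \emph{independently of the global geometry of $M$}. Here I would combine positivity, $\mathcal N_{M_\epsilon}\ge0$, with domain monotonicity across the circle $r=\rho$, which lies inside the exact conical collar: comparing the harmonic extension on $M_\epsilon$ with the competitor that is harmonic on $\{\epsilon<r<\rho\}$ and then either killed (Dirichlet) or left free (Neumann) at $r=\rho$ sandwiches $\mathcal N_{M_\epsilon}$ between the two \emph{explicit, diagonal} collar operators, $\epsilon^{-1}P\,\frac{I-Q}{I+Q}\le \mathcal N_{M_\epsilon}\le \epsilon^{-1}P\,\frac{I+Q}{I-Q}$, where $Q:=(\epsilon/\rho)^{2P}$ has eigenvalue $(\epsilon/\rho)^{2|n|/\alpha}$. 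Conjugating by $P^{-1/2}$ collapses the unknown far operator and leaves the clean bound $(I+Q)^{-1}\le B_\epsilon\le(I-Q)^{-1}$; operator monotonicity of $\log$ then gives $-\log(I+Q)\le \log B_\epsilon\le-\log(I-Q)$, whence
\[
\bigl|\operatorname{tr}\log B_\epsilon\bigr|\le \operatorname{tr}\log\frac{I+Q}{I-Q}=2\sum_{n\ge1}\log\frac{1+(\epsilon/\rho)^{2n/\alpha}}{1-(\epsilon/\rho)^{2n/\alpha}}.
\]

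Finally I would make this explicit. With $t:=(\epsilon/\rho)^{2/\alpha}$, the hypothesis $\epsilon\le 2^{-\alpha}$ forces $t\le\epsilon^{2/\alpha}\le\tfrac14$ and $\epsilon^{2/\alpha}\le\tfrac12\epsilon^{1/\alpha}$; summing the geometric series using $\log\frac{1+x}{1-x}\le 2x/(1-x^2)$ bounds the right-hand side by $\tfrac{256}{45}\,t\le\tfrac{128}{45}\,\epsilon^{1/\alpha}$, comfortably below $6\epsilon^{1/\alpha}$. The two obstacles I anticipate are (i) confirming that the $k=1$, $b_1=1$ gluing really leaves only the single factor $\det B_\epsilon$ in error---in particular that the relative-determinant factorization carries no multiplicative anomaly because $B_\epsilon-I$ is trace class, and that the mode $n=0$ is consistently removed as the common zero mode---and (ii) verifying the Dirichlet/Neumann comparison with the correct normal orientations so that the collar factor is exactly $(\epsilon/\rho)^{2P}$. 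The positivity of the Dirichlet-to-Neumann map together with this monotonicity are precisely what render the constant dimension-free.
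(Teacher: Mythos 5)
Your proposal is correct in substance but follows a genuinely different route for the one step that matters, namely the effective error bound on $\log\det R_\epsilon$. The paper isolates the same quantity (everything else in the gluing formula is exact when $k=1$, $b_1=1$, exactly as you observe), but then works \emph{additively}: it writes $\epsilon R_\epsilon=A_\epsilon+K_\epsilon$ with $A_\epsilon=-2|\mathcal V|$, derives an explicit Fourier-mode formula for $K_\epsilon$ in terms of the harmonic-extension operator $L_\epsilon$ (boundary data at $r=\epsilon$ mapped to the trace at $r=1$), bounds $\|L_\epsilon\|\le 1$ by Green's theorem, estimates $\operatorname{tr}|K_\epsilon|$ and $\|K_\epsilon A_\epsilon^{-1}\|$ mode by mode, and closes with $\log\det(A+K)-\log\det A=\int_0^1\operatorname{tr}\bigl((A+tK)^{-1}K\bigr)\,dt$. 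You instead factor \emph{multiplicatively} through $P^{1/2}$ and replace all information about the far side of $M_\epsilon$ by the two-sided quadratic-form sandwich of $\mathcal N_{M_\epsilon}$ between the Neumann and Dirichlet annulus Dirichlet-to-Neumann operators on the exact conical collar; this is sound (the Dirichlet-energy characterization gives both inequalities, and your eigenvalue computations for $Q=(\epsilon/\rho)^{2P}$ check out), it yields trace-classness of $B_\epsilon-I$ for free from the sandwich, it never needs $L_\epsilon$ or its norm bound, and it produces the sharper constant $\tfrac{128}{45}<6$. What the paper's scheme buys in exchange is uniformity over $k$: for $k\ge 2$ the locally constant modes $\mathcal C$ generate the $\log\log(1/\epsilon)$ term, $A_\epsilon$ is no longer two-sidedly comparable to $\epsilon R_\epsilon$ in any simple diagonal way, and the off-diagonal blocks coupling $\mathcal C$ to $L_0^2$ (which involve $L_\epsilon$ essentially) have no analogue of your collar sandwich; the additive perturbation argument handles all of that at once, at the cost of an $o(1)$ error rather than an explicit rate. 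The one step you should make honest is the anomaly-free factorization $\log\det{}'R_\epsilon=\log\det{}'(2\epsilon^{-1}P)+\log\det_FB_\epsilon$: you need $P^{-1/2}(B_\epsilon-I)P^{1/2}$ trace class (not just $B_\epsilon-I$), which does hold here because $\epsilon\mathcal N_{M_\epsilon}-P$ decays super-exponentially in the mode index, but verifying that essentially reproduces the paper's mode computation; alternatively, prove the factorization directly by the same $\int_0^1\operatorname{tr}$ identity the paper uses, which sidesteps the conjugation issue. Also take $\rho$ strictly inside the exactly conical neighborhood $(0,3/2)$; any $\rho\in[1,3/2)$ only improves your constant.
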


\subsection{Relation to isospectral compactness}
In a series of papers \cite{ops1,ops2,ops3} from the 1980s, Osgood, Phillips, and Sarnak investigate sets of isospectral metrics on surfaces. Their key result is that in the setting of closed surfaces, any set of isospectral metrics is compact in the natural $C^{\infty}$ topology. The proof makes extensive use of the determinant of the Laplacian. First, in \cite{ops1}, Osgood, Phillips, and Sarnak prove that within any conformal class of metrics (normalized so that the area is constant) on a closed surface, the determinant of the Laplacian is maximized by the constant curvature metric. They then show that on the moduli space of constant curvature metrics, the log-determinant of the Laplacian is a proper map. These two facts are key to the proof of isospectral compactness. In \cite{ops3}, an analogous result is proved for isospectral sets of planar domains (normalized so that the boundary length is constant), with the usual flat Euclidean metric and Dirichlet boundary conditions.

Khuri, in \cite{kht,kh}, addressed the natural question of whether a similar result holds for isospectral sets of flat metrics on a topological surface $\Sigma_{p,n}$, which is a surface of genus $p$ with $n$ disks removed. The setting $p=0$ is precisely the case of planar domains and $n=0$ is the case of closed surfaces, so Khuri only considered $np\geq 1$. For $np\geq 1$, Khuri showed by constructing counterexamples that the log of the determinant is not a proper map on the appropriate moduli space of flat metrics \cite{kh}. The counterexamples are closely related to our construction of $M_{\epsilon}$; namely, Khuri took surfaces $(M,g)$ of genus $p$ with $n$ isolated conic singularities and removed a disk of radius $\epsilon$ around each singular point, then let $\epsilon$ go to zero. After normalizing so that the boundary length is constant, Khuri showed that the resulting surfaces $(M_\epsilon,\hat g)$ approach the boundary of moduli space but have $\log\det\Delta_{(M_\epsilon,\hat g)}$ bounded below. Therefore, the Osgood-Phillips-Sarnak approach does not work for $np\geq 1$ \cite{kh}. On the other hand, using a comparison of two different moduli spaces, Y.-H. Kim has recently proved isospectral compactness of sets of flat metrics on $\Sigma_{p,n}$ in this $np\geq 1$ setting \cite{kim}. The problem remains open, for all $p$ and all $n\geq 1$, if the isospectral metrics are not assumed to be flat.

Our goal is to study the behavior of the determinant of the Laplacian as a function on the space of metrics on $\Sigma_{p,n}$. Based on the results of Khuri and Kim, we expect that this work may have further applications to the isospectral problem and related questions. As a first step, we apply Theorem \ref{truncated} to sharpen the results of Khuri in \cite{kht,kh}. Let $(M,g)$ be a fixed surface of genus $p$, where $g$ is a flat
conical metric with $n$ conical singularities, and
let $V$ be the volume of $M$.
Let $M_{\epsilon}$ be the surface obtained from $M$ by removing a cone
of radius $\epsilon$ around each of the $n$ cone points. We can then apply
Theorem \ref{truncated} to $M_{\epsilon}$, where $k=n$ and $b_i=1$ for all $i$.
However,
Khuri normalizes so that the geodesic curvature
is constant and equal to $-1$ on the boundary, which means multiplying
the metric on $M_{\epsilon}$ by $\epsilon^{-2}$; let $\hat g=\epsilon^{-2}g$.
By Proposition \ref{scaling}, \begin{equation}
\label{scalingthing}\log\det\Delta_{(M_{\epsilon},\hat g)}=
\log\det\Delta_{(M_{\epsilon},g)}+2\zeta_{M_{\epsilon}}(0)\log\epsilon.\end{equation}
On the other hand, since $M_\epsilon$ is a smooth surface with boundary, the McKean-Singer heat asymptotics of \cite{ms} (a good exposition may also be found in \cite{r}) imply that
\begin{equation}\label{zetathing}
\zeta_{M_{\epsilon}}(0)=\frac{1}{6}\chi(M_{\epsilon})=\frac{1}{6}
(\chi(M)-n)=\frac{1}{6}(2-2p-n).\end{equation}
Moreover, by the Gauss-Bonnet theorem, the sum of all $\alpha_{i}$ must
be equal to $2p+n-2$ \cite{kht}.
Combining Theorem \ref{truncated} with (\ref{scaling}), (\ref{zetathing}),
and the Gauss-Bonnet theorem, we have shown:
\begin{proposition}\label{modtrunc} The determinant of the Laplacian of Khuri's metrics
$(M_{\epsilon},\hat g)$ has an expansion as $\epsilon\rightarrow 0$:
\[\log\det\Delta_{(M_{\epsilon},\hat g)}=
\frac{1}{6}(2-2p-n+\sum_{i=1}^{n}(\frac{1}{\alpha_{i}}))\log\epsilon
+(n-1)\log\log 1/\epsilon\]
\begin{equation}\label{khuriexp}+\log\det \Delta_{(M,g)}
-\sum_{i=1}^{n}\log\det\Delta_{C_{\alpha_{i},1}}+n\log 2
-\log V+o(1).
\end{equation}\end{proposition}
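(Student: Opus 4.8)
The plan is to obtain Proposition~\ref{modtrunc} as a direct specialization of Theorem~\ref{truncated} followed by the metric rescaling recorded in~(\ref{scalingthing}). No new analysis is needed here: all of the analytic content sits in Theorem~\ref{truncated} and in Proposition~\ref{scaling}, so the remaining work is careful bookkeeping of the $\epsilon$-dependent terms together with two topological inputs, the heat-coefficient identity~(\ref{zetathing}) and Gauss--Bonnet.

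First I would apply Theorem~\ref{truncated} in the case $k=n$ with $b_i=1$ for every $i$, so that a disk is excised around each of the $n$ cone points of $(M,g)$. With this choice the two radius-dependent terms in~(\ref{second}) drop out: each $\log b_i=0$, and since every product $b_1\cdots\hat b_j\cdots b_n$ equals $1$ we get $\beta=\tfrac1n\sum_{j=1}^n 1=1$, hence $\log\beta=0$. This already yields the expansion of $\log\det\Delta_{(M_\epsilon,g)}$ with the stated constant terms $\log\det\Delta_{(M,g)}-\sum_i\log\det\Delta_{C_{\alpha_i,1}}+n\log 2-\log V$, the preserved $(n-1)\log\log(1/\epsilon)$ term, and $\log\epsilon$-coefficient $(n-1)+\tfrac16\sum_i(\alpha_i+\tfrac1{\alpha_i})$.

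Next I would pass from $g$ to Khuri's normalized metric $\hat g=\epsilon^{-2}g$ via the scaling identity~(\ref{scalingthing}), which adds $2\zeta_{M_\epsilon}(0)\log\epsilon$ to the expansion and leaves every other term unchanged. Since $M_\epsilon$ is a smooth surface with boundary and we impose Dirichlet conditions, the McKean--Singer asymptotics give $\zeta_{M_\epsilon}(0)=\tfrac16\chi(M_\epsilon)=\tfrac16(2-2p-n)$ as in~(\ref{zetathing}); because this value is a conformal, and in particular a scaling, invariant, it may be evaluated for either $g$ or $\hat g$ without ambiguity, and the exactness of~(\ref{scalingthing}) means the error stays $o(1)$.

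The only step requiring genuine care is the simplification of the resulting $\log\epsilon$-coefficient, and this I regard as the main obstacle — not analytic but arithmetic. Here I would invoke Gauss--Bonnet in the form $\sum_i\alpha_i=2p+n-2$ to trade $\tfrac16\sum_i\alpha_i$ for a purely topological quantity, and then collect it against the rescaling contribution $2\zeta_{M_\epsilon}(0)=\tfrac13(2-2p-n)$, while the $\tfrac16\sum_i\tfrac1{\alpha_i}$ piece is untouched and survives into the final formula. Tracking the interplay between the $(n-1)$ inherited from Theorem~\ref{truncated}, the cone-angle sum collapsed by Gauss--Bonnet, and the topological term from the rescaling is the delicate part of the bookkeeping; combining these with the constant terms already identified produces the expansion~(\ref{khuriexp}).
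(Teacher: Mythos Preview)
Your proposal is correct and follows essentially the same route as the paper: specialize Theorem~\ref{truncated} to $k=n$, $b_i=1$ (so the $\log b_i$ and $\log\beta$ terms vanish), then apply the scaling identity~(\ref{scalingthing}) together with the value~(\ref{zetathing}) and the Gauss--Bonnet relation $\sum_i\alpha_i=2p+n-2$ to simplify the $\log\epsilon$ coefficient. The paper's argument, given in the paragraph immediately preceding the proposition, is exactly this combination of ingredients.
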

In the $n=1$ case, we also obtain a better error estimate from Theorem \ref{bettererror}. As a consequence of Proposition \ref{modtrunc}, we conclude that
\begin{proposition} The log-determinant $\log\det\Delta_{(M_\epsilon,\hat g)}$ approaches $-\infty$ as $\epsilon\rightarrow 0$ iff
\begin{equation}\label{condition}\sum_{i=1}^n\frac{1}{\alpha_i}>2p+n-2.\end{equation}
\end{proposition}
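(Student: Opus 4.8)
The plan is to read off the limit directly from the asymptotic expansion (\ref{khuriexp}) of Proposition \ref{modtrunc}. It is cleanest to substitute $x=\log(1/\epsilon)$, which tends to $+\infty$ as $\epsilon\to 0$, and to abbreviate the coefficient of $\log\epsilon$ as
\[A=\frac{1}{6}\left(2-2p-n+\sum_{i=1}^n\frac{1}{\alpha_i}\right).\]
In these terms (\ref{khuriexp}) becomes $\log\det\Delta_{(M_\epsilon,\hat g)}=-Ax+(n-1)\log x+K+o(1)$, where $K$ collects the bounded constant terms. The whole argument then rests on the elementary fact that a linear term in $x$ dominates a logarithmic one as $x\to\infty$, so whenever $A\neq 0$ the linear term $-Ax$ dictates the limit regardless of the sign or size of $(n-1)\log x$.

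First I would dispose of the generic case $A\neq 0$. If $A>0$, then $-Ax\to -\infty$ and dominates the slowly growing $(n-1)\log x$, forcing $\log\det\Delta_{(M_\epsilon,\hat g)}\to -\infty$. If $A<0$, then $-Ax\to +\infty$ and overwhelms every other term, so $\log\det\Delta_{(M_\epsilon,\hat g)}\to +\infty$ and in particular the limit is not $-\infty$. Since the inequality $A>0$ is literally a rearrangement of $\sum_{i=1}^n\frac{1}{\alpha_i}>2p+n-2$, this already settles the equivalence whenever we are away from the threshold.

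The remaining work is the borderline case $A=0$, where the linear term vanishes and one must appeal to the secondary term. Here $\log\det\Delta_{(M_\epsilon,\hat g)}=(n-1)\log x+K+o(1)$. If $n\geq 2$ this tends to $+\infty$; if $n=1$ the coefficient $(n-1)$ vanishes as well and the expression converges to the finite constant $K$. In both subcases the limit is not $-\infty$, which matches the strict inequality in (\ref{condition}): when $A=0$ we have $\sum_{i=1}^n\frac{1}{\alpha_i}=2p+n-2$, so (\ref{condition}) fails. Collecting the three cases, $\log\det\Delta_{(M_\epsilon,\hat g)}\to -\infty$ exactly when $A>0$, that is, exactly when (\ref{condition}) holds. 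I do not expect any serious obstacle; the only points requiring care are the linear-versus-logarithmic growth comparison and the degenerate threshold $A=0$, where the $\log\log$ term must be examined to confirm that the limit is still not $-\infty$.
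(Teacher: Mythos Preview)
Your proposal is correct and is exactly what the paper does: it states the proposition as an immediate consequence of the expansion (\ref{khuriexp}) in Proposition \ref{modtrunc}, and the case analysis you spell out (the sign of the $\log\epsilon$ coefficient, with the $\log\log$ term handling the threshold $A=0$) is precisely the reasoning the paper leaves implicit and then illustrates in its Examples subsection. There is nothing to add.
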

Notice that as any individual $\alpha_i$ goes to zero, the condition (\ref{condition}) is eventually satisfied; in particular, if even one of the cone angles is small enough, the determinant of $M_\epsilon$ will go to $-\infty$ as $\epsilon\rightarrow 0$. This observation suggests that it might be fruitful to analyze the behavior of the expansion in Proposition \ref{modtrunc} as one or more of the cone angles $\alpha_i$ degenerate to zero. Such an analysis would likely require a joint asymptotic expansion in $\epsilon$ and $\alpha_i$, which could be difficult to prove, but the reward would be a much better understanding of the determinant on moduli space.

\subsection{Examples} We now examine some particular cases.
First we let $p=1$ and $n=1$;
in this setting, Khuri proved that $\log\det\Delta_{(M_{\epsilon},\hat g)}$
is bounded below (Equation 4.3.15 of \cite{kht}). 
Note that by the Gauss-Bonnet theorem, the only cone
angle $\alpha$ must be 1. Plugging $p=1$, $n=1$, and $\alpha=1$
into (\ref{khuriexp}),
we see that the coefficients of $\log\epsilon$
and $\log\log 1/\epsilon$ vanish, and hence the log determinants actually converge
to a constant. This sharpens Khuri's result.

Next we let $p=1$ and $n\geq 2$. In this case, Khuri showed (Equation
4.3.19 of \cite{kht}) that if all cone angles $\alpha_{i}$ are equal to $1$,
\[\log\det\Delta_{(M_{\epsilon},\hat g)}\geq (n-1)\log\log 1/\epsilon+C.\]
The coefficient of $\log\epsilon$ in our formula again vanishes, and the coefficient of
$\log\log 1/\epsilon$ is precisely $n-1$, which shows that Khuri's inequality is sharp and also identifies the constant $C$. On the other hand, when the cone angles $\alpha_{i}$ are not all equal to 1,
they must still sum to $n$, so $\sum_{i=1}^n\alpha_i^{-1}>n$, and the coefficient of $\log\epsilon$ is
positive. In this case, the log determinants do in fact decrease to
$-\infty$ as $\epsilon\rightarrow 0$; Khuri did not consider this case.

Finally, we let $p\geq 2$ and $n=1$. Here the cone angle $\alpha_{i}$ must
be $2p-1$. The coeficient of the $\log|\log\epsilon|$ term vanishes,
but the coefficient of $\log\epsilon$ is precisely
\[\frac{1}{6}(1-2p+\frac{1}{2p-1}).\]
When $p\geq 2$, this is positive, so the log determinants increase to $\infty$
as $\epsilon\rightarrow 0$. It must be noted that the leading-order coefficient of $\log\epsilon$ is inconsistent with Equation 4.3.22 in \cite{kht}. However, the calculation
in \cite{kht} is not correct, as the asymptotic behavior of the determinant
of an annulus is computed with the Euclidean metric when it should be computed with the conic metric.

\subsection{Organization}

The proof of this theorem proceeds in two steps. First we use the gluing formula
of Burghelea, Friedlander, and Kappeler \cite{bfk} to break up the manifold
$M$ along $\Gamma_{\epsilon}$ and decompose $\log\det\Delta_{M}$ as a sum
of the log-determinants of the individual pieces. The only non-explicit term in
the BFK formula is the log-determinant of the 
Neumann jump operator on $\Gamma_{\epsilon}$. 
In the second step, we compute the asymptotics of this Neumann jump operator
as $\epsilon\rightarrow 0$. This computation is based closely on an
observation of Wentworth \cite{w}. Putting these two steps together
gives Theorem \ref{truncated}. The combination of Wentworth's observation and the BFK gluing formula is a natural one which has proven useful in other investigations of the determinant on moduli space \cite{ko}.

\subsection{Acknowledgements}
This paper is a generalization of the final chapter of my Stanford Ph.D. thesis \cite{sh}. I am deeply grateful to my advisor Rafe Mazzeo for all his help and support. Additionally, this paper would not have been written without a discussion with Richard Wentworth; I would like to thank him and also the other organizers of the conference 'Analysis, Geometry, and Surfaces' at Autrans in March 2011. Finally, I would like to thank Alexey Kokotov and Andras Vasy for helpful comments and bug-spotting, as well as the ARCS foundation for support in 2011-2012.

\section{Determinant gluing formula}

In this section, we recall the Burghelea-Friedlander-Kappeler gluing formula and apply it to our problem.
For any $\epsilon>0$, we define an operator $R_{\epsilon}$ on
$\Gamma_{\epsilon}$, following \cite{bfk}: 
if $f$ is a function on $\Gamma_{\epsilon}$,
we let $u_{+}$ be the solution of $\Delta_{M_{\epsilon}}u_{+}=0$
with $u_{+}|_{\Gamma_{\epsilon}}=f$.
Similarly, let $u_{-}$ be the harmonic
function on the union of the $C_{\alpha_i,b_i\epsilon}$ with boundary data
$f$. As always, we require $u_{-}$ and $u_{+}$ to be in the
Friedrichs domain at all conic singularities. 
Then in a neighborhood of $\Gamma_{\epsilon}$,
$g=\partial_{r}u_{-}-\partial_{r}u_{+}$ is well-defined; we let
$R_{\epsilon}f$ be the restriction of $g$ to $\Gamma_{\epsilon}$.
In fact, $R_{\epsilon}$ is simply the sum of the Dirichlet-to-Neumann
operators on $M_{\epsilon}$ and $C_{\alpha_i,b_i\epsilon}$, and we call it
the \emph{Neumann jump operator} for $\Gamma_{\epsilon}$.

It is well-known
that $R_{\epsilon}$ is an elliptic pseudodifferential operator of order 1,
and that it is possible to define a zeta function and determinant of $R_{\epsilon}$
in the usual fashion \cite{bfk}. In this paper, when we define determinants, we always leave out the zero eigenvalues. For $R_\epsilon$, there is precisely one:
\begin{lemma} The kernel of $R_{\epsilon}$ is equal to the set of
globally constant functions on $\Gamma_{\epsilon}$.
\end{lemma}
\begin{proof} Suppose that $f$ is in the kernel of $R_{\epsilon}$, and
construct $u_{+}$ and $u_{-}$ as above. Since $\partial_{r}u_{-}=
\partial_{r}u_{+}$ on $\Gamma_{\epsilon}$, and $u_{+}=u_{-}$ on $\Gamma_{\epsilon}$,
$u_{+}$ and $u_{-}$ glue together to define a $C^{1}$ function $u$ on
$M$ which solves $\Delta_{M}u=0$ weakly. Since all functions are in the 
Friedrichs domain at the cone points, $u$ is bounded. 
By elliptic regularity, $u$
is in fact a smooth bounded harmonic function on $M$, and therefore must be
constant. \end{proof}

We now have the following gluing formula, due to Burghelea, Friedlander, and
Kappeler \cite{bfk} in the smooth setting and to Loya, McDonald and Park 
\cite{lmp} in the setting of manifolds with conical singularities:
\begin{proposition}\cite{bfk} 
Let $V$ be the volume of $M$. Then
\begin{equation}\label{bfkgluing}\log\det \Delta_{M}=
\log\det\Delta_{M_{\epsilon}}+\sum_{i=1}^k
\log\det\Delta_{C_{\alpha_i,b_i\epsilon}}
+\log V-\sum_{i=1}^k\log 2\pi\alpha_ib_i\epsilon +\log\det R_{\epsilon}.
\end{equation}
\end{proposition}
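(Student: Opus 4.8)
The plan is to treat the stated identity as a direct application of the gluing theorem of Burghelea--Friedlander--Kappeler, extended to the conic setting by Loya--McDonald--Park, the one genuine subtlety being that neither $\Delta_M$ nor $R_\epsilon$ is invertible. Both have a one-dimensional kernel: $\Delta_M$ on the closed surface annihilates the constants, and $R_\epsilon$ annihilates the globally constant functions on $\Gamma_\epsilon$ by the preceding Lemma, while the Dirichlet operators $\Delta_{M_\epsilon}$ and $\Delta_{C_{\alpha_i,b_i\epsilon}}$ are strictly positive. To bring the gluing formula to bear I would first regularize by adding a spectral parameter: for $\lambda>0$ all four operators, together with the Neumann jump operator $R_\epsilon(\lambda)$ built from $(\Delta+\lambda)$-harmonic extensions, are invertible. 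Verifying that the hypotheses of the conic gluing theorem hold for $(\Delta+\lambda)$ --- that $R_\epsilon(\lambda)$ is an elliptic pseudodifferential operator of order one whose zeta-determinant is defined, and that the Friedrichs extension at the cone tips is compatible with the decomposition --- is exactly the content of \cite{lmp}, which I would cite. This yields the clean factorization $\det(\Delta_M+\lambda)=\det(\Delta_{M_\epsilon}+\lambda)\prod_i\det(\Delta_{C_{\alpha_i,b_i\epsilon}}+\lambda)\,\det R_\epsilon(\lambda)$ for every $\lambda>0$.

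The second step is to let $\lambda\to0$ and read off the constant from the way the two degenerate factors vanish. The Dirichlet determinants converge to their $\lambda=0$ values. On the left, the simple zero eigenvalue forces $\det(\Delta_M+\lambda)=\lambda\,\det{}'\Delta_M\,(1+o(1))$. On the right, at fixed $\epsilon$ the remaining spectrum of $R_\epsilon(\lambda)$ stays bounded away from zero, so a single eigenvalue $\mu_0(\lambda)$ vanishes and $\det R_\epsilon(\lambda)=\mu_0(\lambda)\,\det{}'R_\epsilon\,(1+o(1))$. Comparing orders in $\lambda$ then reduces the whole identity to the evaluation of the finite limit $c:=\lim_{\lambda\to0}\mu_0(\lambda)/\lambda$, after which $\det{}'\Delta_M=\det\Delta_{M_\epsilon}\prod_i\det\Delta_{C_{\alpha_i,b_i\epsilon}}\,\det{}'R_\epsilon\cdot c$, and the Proposition is equivalent to the claim that $\log c=\log V-\sum_i\log(2\pi\alpha_i b_i\epsilon)$.

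To compute $c$ I would use first-order Rayleigh--Schr\"odinger perturbation theory with unperturbed eigenvector the constant $\mathbf 1$ spanning $\ker R_\epsilon$. Writing the quadratic form of $R_\epsilon(\lambda)$ as the sum of the Dirichlet energies of the $(\Delta+\lambda)$-harmonic extensions over $M_\epsilon$ and over the cones and integrating by parts, one finds $\langle R_\epsilon(\lambda)\mathbf 1,\mathbf 1\rangle=\lambda V+O(\lambda^2)$, since at $\lambda=0$ these extensions are identically $1$ and the volumes of $M_\epsilon$ and of the excised cones sum to $V$. Dividing by $\|\mathbf 1\|^2_{\Gamma_\epsilon}=\sum_i 2\pi\alpha_i b_i\epsilon$, the squared $L^2$-norm of the constant over $\Gamma_\epsilon$, gives $c$ to leading order as $V$ over the total boundary length, and hence, after taking logarithms, the volume term $\log V$ together with a contribution from the lengths of the circles $\Gamma_{\epsilon,i}$.

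The step requiring the most care --- and the one I expect to be the main obstacle --- is converting this leading computation into the precise length-dependent term $-\sum_i\log(2\pi\alpha_i b_i\epsilon)$ claimed in the Proposition. When $k\ge2$ the single zero mode $\mathbf 1$ sits inside the $k$-dimensional space of functions constant on each $\Gamma_{\epsilon,i}$, on which $R_\epsilon(\lambda)$ has a delicate low-lying structure: the cone Dirichlet-to-Neumann maps contribute $k$ separately small eigenvalues, whereas the exterior map degenerates only along the global constant. Tracking how these interact, confirming that exactly one eigenvalue of $R_\epsilon(\lambda)$ tends to zero, and extracting its leading coefficient in the correct per-component form is the crux and the point at which the bookkeeping must be done most carefully. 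In parallel I would need the conic analysis of \cite{lmp} to guarantee that the zeta-regularized determinant of the family $R_\epsilon(\lambda)$ varies continuously through the degeneration, so that both the factorization $\det R_\epsilon(\lambda)=\mu_0(\lambda)\,\det{}'R_\epsilon\,(1+o(1))$ and the limit defining $c$ are legitimate.
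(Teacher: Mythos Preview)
The paper does not prove this proposition at all; it is simply quoted as the Burghelea--Friedlander--Kappeler gluing theorem, with a pointer to Loya--McDonald--Park for the extension to conic singularities, and the only comment following it is the remark that the length of $\Gamma_\epsilon$ equals $\sum_{i=1}^k 2\pi\alpha_i b_i\epsilon$. So there is no argument in the paper to compare your sketch against; you have supplied strictly more than the paper does.

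Your regularize-and-limit strategy is exactly the standard route to the BFK constant when $\Delta_M$ and $R_\epsilon$ have one-dimensional kernels, and the Rayleigh--Schr\"odinger step is correct: $\langle R_\epsilon(\lambda)\mathbf 1,\mathbf 1\rangle=\lambda V+O(\lambda^2)$ and $\|\mathbf 1\|^2_{L^2(\Gamma_\epsilon)}=\ell(\Gamma_\epsilon)=\sum_i 2\pi\alpha_i b_i\epsilon$, giving $c=V/\ell(\Gamma_\epsilon)$. That produces $\log V-\log\bigl(\sum_i 2\pi\alpha_i b_i\epsilon\bigr)$, the \emph{total} boundary length, which is indeed the constant in the standard BFK formula. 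You flag the mismatch with the per-component form $-\sum_i\log(2\pi\alpha_i b_i\epsilon)$ as ``the main obstacle,'' but there is no further mechanism in the argument that would convert the sum into a product: your computation is the right one, and the product form written in the Proposition appears to be a misstatement (the paper's own parenthetical remark about $\ell(\Gamma_\epsilon)$ immediately afterward supports the total-length reading). Since the product form is then carried forward through the intermediate identities, the downstream constants may inherit a compensating discrepancy; this is worth checking but is an issue with the paper, not with your proof.

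Your caution about the $k$-dimensional block of locally constant functions is unnecessary for this particular limit. At fixed $\epsilon$ the restriction of $R_\epsilon$ to the space $\mathcal C$ of mean-zero locally constant functions is a nonzero finite-rank operator, so its eigenvalues are bounded away from zero uniformly as $\lambda\to 0$; only the single global-constant eigenvalue $\mu_0(\lambda)$ tends to zero, and first-order perturbation on $\mathbf 1$ suffices to extract $c$. The delicate low-lying structure on $\mathcal C$ matters only later, in the $\epsilon\to 0$ analysis of $\det R_\epsilon$, not here.
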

Note that
the length of $\Gamma_{\epsilon}$ is exactly $\sum_{i=1}^k 2\pi\alpha_ib_i\epsilon$.
Rearranging (\ref{bfkgluing}) gives us a formula for $\log\det\Delta_{M_{\epsilon}}$
in terms of the other undetermined quantities:
\[\log\det \Delta_{M_{\epsilon}}=
\log\det\Delta_{M}-\sum_{i=1}^k
\log\det\Delta_{C_{\alpha_i,b_i\epsilon}}
-\log V\]
\begin{equation}\label{bfkgluingtwo}+\sum_{i=1}^k\log 2\pi\alpha_ib_i+k\log\epsilon-\log\det R_{\epsilon}.
\end{equation}
To simplify the formula, we make a scaling observation:
\begin{proposition}\label{scaling}
Let $(\Omega,g)$ be any compact Riemannian surface, with or without boundary and
with or without isolated conic singularities. Then
\[\log\det\Delta_{\Omega,\epsilon^{2}g}=\log\det\Delta_{\Omega,g}-2
\zeta_{\Omega,g}(0)\log\epsilon.\] \end{proposition}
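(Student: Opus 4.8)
The plan is to exploit the simple way the Laplace--Beltrami operator transforms under a constant conformal rescaling of the metric, and then to track the effect on the zeta-regularized determinant through the spectral zeta function.

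First I would verify the operator scaling. In dimension two, under $g\mapsto\epsilon^{2}g$ the inverse metric scales by $\epsilon^{-2}$ while the volume form scales by $\epsilon^{2}$, and a short computation from the coordinate expression of $\Delta_{g}$ shows these combine to give $\Delta_{\Omega,\epsilon^{2}g}=\epsilon^{-2}\Delta_{\Omega,g}$. (The identity in fact holds in every dimension; only the two-dimensional case is needed here.) Consequently, if $\{\lambda_{j}\}$ denotes the spectrum of $\Delta_{\Omega,g}$ --- which is discrete and nonnegative owing to the Dirichlet boundary conditions and the Friedrichs extension at the conic points --- then the spectrum of $\Delta_{\Omega,\epsilon^{2}g}$ is exactly $\{\epsilon^{-2}\lambda_{j}\}$, with the same multiplicities and in particular the same zero eigenspace.

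The second step is to pass to zeta functions. For $\operatorname{Re}(s)$ large enough that the defining series converges, the eigenvalue scaling gives immediately
\[\zeta_{\Omega,\epsilon^{2}g}(s)=\sum_{\lambda_{j}\neq 0}(\epsilon^{-2}\lambda_{j})^{-s}=\epsilon^{2s}\,\zeta_{\Omega,g}(s).\]
Since $\epsilon^{2s}$ is entire, the right-hand side furnishes the meromorphic continuation of the left-hand side to all of $\mathbb{C}$; in particular both sides are holomorphic at $s=0$, holomorphy of $\zeta_{\Omega,g}$ at $s=0$ being part of the standard zeta-determinant theory in both the smooth-boundary and the conic settings already invoked in the paper. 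Differentiating the identity $\zeta_{\Omega,\epsilon^{2}g}(s)=\epsilon^{2s}\zeta_{\Omega,g}(s)$ and evaluating at $s=0$ yields $\zeta_{\Omega,\epsilon^{2}g}'(0)=2\log\epsilon\cdot\zeta_{\Omega,g}(0)+\zeta_{\Omega,g}'(0)$. Recalling that $\log\det\Delta=-\zeta'(0)$ then produces the claimed formula.

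The computation is essentially routine; the only point demanding care is the analytic-continuation step, where one must confirm that multiplying by the entire factor $\epsilon^{2s}$ does not disturb the regularity of the continued zeta function at $s=0$, and that the convention of omitting the zero eigenvalue is applied consistently on both sides. I expect this to be the main --- though minor --- obstacle, and it is dispatched by the observation that $\epsilon^{2s}$ and all its $s$-derivatives are holomorphic everywhere, so the product inherits holomorphy at $s=0$ directly from $\zeta_{\Omega,g}$.
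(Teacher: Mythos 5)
Your proposal is correct and follows exactly the paper's own argument: the eigenvalues scale by $\epsilon^{-2}$, hence $\zeta_{\Omega,\epsilon^{2}g}(s)=\epsilon^{2s}\zeta_{\Omega,g}(s)$, and differentiating at $s=0$ gives the result. Your added care about holomorphy at $s=0$ matches the remark the paper makes immediately after its proof, so there is nothing to change.
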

\begin{proof}If we scale the metric
$g$ by $\epsilon^{2}$, we scale the eigenvalues of the Laplacian by $\epsilon^{-2}$.
The proposition then follows from taking the derivative of
$\epsilon^{2s}\zeta_{\Omega,g}(0)$ at $s=0$. 
\end{proof}Note that, crucially, $\zeta_{\Omega,g}(s)$ has no pole at $s=0$; this is not true for manifolds with conic singularities in general, but is true in two dimensions, where the cross-section is a circle (see \cite{kh} for a detailed explanation). If a pole were present, an analogue of Proposition \ref{scaling} would still hold (using the Laurent series definition of the determinant), but with extra terms.

The special value $\zeta_{\Omega,g}(0)$ is known in many cases. 
In particular a result of Spreafico \cite{s} states that
\[\zeta_{C_{\alpha,1}}(0)=\frac{1}{12}(\alpha+\frac{1}{\alpha}). \]
Combining this with (\ref{bfkgluingtwo}) and rearranging, we obtain
\[\log\det \Delta_{M_{\epsilon}}=(k+\sum_{i=1}^k\frac{1}{6}(\alpha_i+\frac{1}{\alpha_i}))\log\epsilon+\log\det\Delta_M-\sum_{i=1}^k
\log\det\Delta_{C_{\alpha_i,1}}\]
\begin{equation}\label{intermediate}+\sum_{i=1}^k\frac{1}{6}(\alpha_i+\frac{1}{\alpha_i})\log b_i
+\sum_{i=1}^k\log 2\pi\alpha_ib_i-\log\det R_{\epsilon}-\log V.
\end{equation}

\section{The Neumann jump operator}

We now prove the following asymptotic
expansion for $\log\det R_{\epsilon}$ as $\epsilon$ goes to zero; combining it with (\ref{intermediate}) yields Theorem \ref{truncated}.

\begin{theorem}\label{asymplogdet} As $\epsilon\rightarrow 0$, 
\begin{equation}\label{njoasymp}\log\det R_\epsilon=\log\epsilon-(k-1)\log\log(1/\epsilon)+\sum_{i=1}^k\log\pi b_i\alpha_i-\log\beta+o(1).\end{equation} Moreover, when $k=1$, $b_1=1$, and $\epsilon\leq 2^{-\alpha_1}$, the error in (\ref{njoasymp}) is bounded by $6\epsilon^{1/\alpha_1}$.
 \end{theorem}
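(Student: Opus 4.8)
The plan is to compute $\log\det R_\epsilon$ by exploiting the rotational symmetry of the metric near each cone point, which diagonalizes in the Fourier basis $\{e^{in\theta}\}$ on each circle $\Gamma_{\epsilon,i}$ the two Dirichlet-to-Neumann operators whose sum is $R_\epsilon$. Write $R_\epsilon=\mathcal D^{\mathrm{int}}_\epsilon+\mathcal D^{\mathrm{ext}}_\epsilon$, where $\mathcal D^{\mathrm{int}}_\epsilon$ is the Dirichlet-to-Neumann operator of the union of the cones $C_{\alpha_i,b_i\epsilon}$ and $\mathcal D^{\mathrm{ext}}_\epsilon$ that of $M_\epsilon$. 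On the cone the bounded harmonic extension of $e^{in\theta}$ is $(r/b_i\epsilon)^{|n|/\alpha_i}e^{in\theta}$, so $\mathcal D^{\mathrm{int}}_\epsilon$ is diagonal with eigenvalue $|n|/(\alpha_ib_i\epsilon)$ on the $n$-th mode of $\Gamma_{\epsilon,i}$ and eigenvalue $0$ on the constant mode. I would then split $L^2(\Gamma_\epsilon)$ into the $k$-dimensional space $K$ of functions that are constant on each component and its orthogonal complement, treat $\log\det$ on these two pieces separately, and check at the end that the coupling between them (which is generated only by the non-symmetric global geometry) is $O(\epsilon^{c})$ and hence negligible in the determinant.

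On the complement of $K$ the analysis is local and explicit. There the exterior extension of $e^{in\theta}$ is, in the collar coordinate $r$, a combination of $r^{|n|/\alpha_i}$ and $r^{-|n|/\alpha_i}$; as $\epsilon\to 0$ the decaying solution dominates and $\mathcal D^{\mathrm{ext}}_\epsilon$ agrees with $\mathcal D^{\mathrm{int}}_\epsilon$ to leading order, so that $R_\epsilon$ acts on the $n$-th mode of $\Gamma_{\epsilon,i}$ by $2|n|/(\alpha_ib_i\epsilon)$ up to a relative error of size $(b_i\epsilon)^{2|n|/\alpha_i}$ coming from the forced admixture of $r^{|n|/\alpha_i}$ (this is Wentworth's observation). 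Zeta-regularizing $\sum_{n\neq 0}\log\bigl(2|n|/(\alpha_ib_i\epsilon)\bigr)$ on each circle by comparison with the Riemann zeta function, using $\zeta_R(0)=-\tfrac12$ and $\zeta_R'(0)=-\tfrac12\log 2\pi$, produces $\log(\pi\alpha_ib_i\epsilon)$ per component, so this block contributes $k\log\epsilon+\sum_{i=1}^k\log(\pi\alpha_ib_i)+o(1)$.

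All of the remaining structure lives on $K$, where $\mathcal D^{\mathrm{int}}_\epsilon$ vanishes and $R_\epsilon$ equals the capacitance (Neumann-jump) matrix of the $k$ shrinking circles in $M_\epsilon$. I would compute this matrix by matched asymptotics against the Green's function $G$ of $M$, whose local expansion at $P_i$ is $\tfrac{1}{2\pi\alpha_i}\log r+O(1)$: writing the harmonic extension of boundary data $(c_i)$ as $\sum_j q_j G(\cdot,P_j)+c_0$ with $\sum_j q_j=0$, the matching condition on $\Gamma_{\epsilon,i}$ gives a linear system whose dominant balance forces the fluxes $q_i$ to have size $1/\log(1/\epsilon)$. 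Passing to an $L^2$-orthonormal basis (dividing by the lengths $2\pi\alpha_ib_i\epsilon$), the resulting $k\times k$ matrix is $\bigl(\epsilon\log(1/\epsilon)\bigr)^{-1}$ times a fixed positive-semidefinite matrix whose kernel is exactly the global constant, in agreement with the Lemma. Its $k-1$ nonzero eigenvalues each scale like $\bigl(\epsilon\log(1/\epsilon)\bigr)^{-1}$, so this block contributes $-(k-1)\log\epsilon-(k-1)\log\log(1/\epsilon)$ together with an $O(1)$ term equal to the product of those $k-1$ eigenvalues of the fixed matrix. Adding the two blocks reproduces the coefficients of $\log\epsilon$ and of $\log\log(1/\epsilon)$ in the claim, and the remaining task is to evaluate this product and verify that it equals $1/\beta$, so that the constants assemble into $\sum_{i=1}^k\log(\pi b_i\alpha_i)-\log\beta$.

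The main obstacle is precisely this last step: extracting the exact constant from the capacitance determinant, identifying it with $-\log\beta$, and confirming that the subleading data—the Robin constants $G(P_i,P_j)$ and the $\log b_i$ corrections to the diagonal—enter only at relative order $1/\log(1/\epsilon)$ and hence contribute $o(1)$. The near-degeneracy of the constant block (an exact zero eigenvalue with $k-1$ eigenvalues of size $1/\log(1/\epsilon)$) is what makes this bookkeeping delicate, since one must be sure that perturbations do not move the excluded zero mode. For the sharp bound when $k=1$ there is no constant block at all, as the only locally constant function is the global constant in the kernel; the entire error then comes from the admixture of $r^{|n|/\alpha_1}$ in $\mathcal D^{\mathrm{ext}}_\epsilon$ on the nonconstant modes, whose $n$-th contribution is a bounded multiple of $\epsilon^{2|n|/\alpha_1}$ when $b_1=1$. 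Summing this series and bounding its tail under the hypothesis $\epsilon\leq 2^{-\alpha_1}$ should yield the stated bound $6\epsilon^{1/\alpha_1}$.
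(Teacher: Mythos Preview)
Your strategy is close to the paper's, but two points deserve attention.

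First, the off-diagonal coupling between the locally constant block $K$ and $L_0^2$ is not $O(\epsilon^c)$ in both directions. While $\Pi_0 R_\epsilon\Pi_K$ is indeed exponentially small (governed by $\mathcal V(\mathcal U_\epsilon^-)^{-1}$, whose eigenvalues decay like $\epsilon^{|m|/\alpha}$), the block $\Pi_K R_\epsilon\Pi_0$ is only of size $(\epsilon\log(1/\epsilon))^{-1}$, coming from the $\log r$ mode of the harmonic extension. So you cannot simply add the two block log-determinants and dismiss the coupling as a power of $\epsilon$. What actually saves the argument is that after right-multiplying by the inverse of the diagonal part, both off-diagonal contributions become $o(1)$: the dangerous block gets multiplied by $|\mathcal V|^{-1}$ on $L_0^2$ (which is $O(1)$) rather than by the large $\log(1/\epsilon)$ factor living on $K$. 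The paper makes this precise by writing $\epsilon R_\epsilon=A_\epsilon+K_\epsilon$ with $A_\epsilon$ block diagonal and proving $\|K_\epsilon A_\epsilon^{-1}\|\to 0$ and $\mathrm{Tr}|K_\epsilon|\cdot\|A_\epsilon^{-1}\|\to 0$; you will need an analogous perturbation estimate rather than a pointwise bound on the coupling.

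Second, your Green's-function capacitance matrix is a genuinely different route to the constant block than the paper takes, and it carries a hidden subtlety. Your matched asymptotics gives, to leading order, the operator $(c_i)\mapsto -\bigl((c_i-c_0)/b_i\bigr)/(\epsilon\log(1/\epsilon))$ with $c_0$ the $\alpha$-weighted average, so the limiting finite matrix a priori depends on the $\alpha_i$ as well as the $b_i$; it is not obvious that its $(k-1)$ nonzero eigenvalues multiply to $1/\beta$, and you have not shown this. The paper avoids this computation entirely: it introduces the transfer operator $L_\epsilon$ (restriction to $\Gamma_1$ of the harmonic extension from $\Gamma_\epsilon$), proves by a soft compactness/maximum-principle argument that $\Pi_{\mathcal C}L_\epsilon\Pi_{\mathcal C}\to 0$, and thereby identifies the limiting operator on $\mathcal C$ as $-\hat B^{-1}/\log(1/\epsilon)$ with $\hat B=\Pi_{\mathcal C}\bar B\Pi_{\mathcal C}$ depending only on the $b_i$. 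The determinant $\det\hat B=\beta$ then follows from an elementary symmetry argument. Your approach could in principle reach the same constant, but reconciling the $\alpha$-dependence in your capacitance matrix with the $\alpha$-independence of $\beta$ is exactly the delicate bookkeeping you flagged, and you should expect it to require a careful treatment of which inner product (weighted or unweighted on $\oplus S^1$) you are using to project onto $\mathcal C$.
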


The proof of this theorem in the special case $k=1$, $\alpha_1=1$, and $b_1=1$ is due to Wentworth \cite{w}, albeit without the error bound; we adapt the proof given there.
\subsection{Computation of $R_{\epsilon}$}
Identifying $\Gamma_{\epsilon}$ with $\oplus_{i=1}^kS^{1}$,
we view $R_{\epsilon}$ as an operator on $L^2(\oplus_{i=1}^kS^{1})$.
The space $L^2(\oplus_{i=1}^kS^{1})$ admits an orthogonal decomposition
\begin{equation}\label{orthodecomp}L^2(\oplus_{i=1}^kS^{1})=\mathcal K\oplus\mathcal C\oplus L_0^2.\end{equation}
Here $\mathcal K$ is the one-dimensional space of globally constant functions, $\mathcal C$ is the $k-1$-dimensional space of locally constant functions which integrate to zero, and $L_0^2$ is the subspace of $L^2(\oplus_{i=1}^kS^{1})$ spanned by the non-constant eigenfunctions on each component. Let $\Pi_{\mathcal K}$, $\Pi_{\mathcal C}$, and $\Pi_0$ be the orthogonal projections onto each summand in (\ref{orthodecomp}). As we have observed, the kernel of $R_\epsilon$ is precisely $\mathcal K$. Moreover, by Green's theorem, the image of $R_\epsilon$ is orthogonal to $\mathcal K$.

We define an operator $L_{\epsilon}$ on $C^{\infty}(\oplus_{i=1}^kS^{1})$ as follows:
for any smooth function $f$ on $\oplus_{i=1}^k(S^1)$, let $u_{+}$ be the
harmonic function on $M_{\epsilon}$ with boundary values $f$ on $\Gamma_\epsilon$. Then let
$L_{\epsilon}(f)=(u_{+})|_{\Gamma_1}$. Note that $L_{\epsilon}$
is essentially the inverse of the operator $\mathcal E_{R(\epsilon)}$ from
\cite{w}; we use $L_{\epsilon}$ instead to avoid issues with the domain
of the extension operator $\mathcal E_{R(\epsilon)}$. We also observe that $L_\epsilon$ is the identity operator on $\mathcal K$. The following proposition is key:

\begin{proposition}\label{qinverse} For each $\epsilon<1$ and each $f\in\mathcal C\oplus L_0^2$, we also have $L_\epsilon f\in\mathcal C\oplus L_0^2$.
Moreover, for any $f\in C^{\infty}(\oplus_{i=1}^kS^{1})$ and any $\epsilon<1$,
\[||L_{\epsilon}f||_{L^{2}(\oplus_{i=1}^kS^{1})}\leq ||f||_{L^{2}(\oplus_{i=1}^kS^{1})}.\]
\end{proposition}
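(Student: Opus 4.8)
The plan is to establish the two claims separately, with the norm bound coming first since the invariance statement will follow from a symmetry observation once we understand the structure of $L_\epsilon$. For the norm bound $\|L_\epsilon f\|_{L^2(\Gamma_1)}\leq\|f\|_{L^2(\Gamma_\epsilon)}$, the key is that $u_+$ is the harmonic extension of $f$ into $M_\epsilon$, so evaluating it on the interior curve $\Gamma_1$ should only decrease the $L^2$ mass. First I would invoke the maximum principle together with the mean-value property of harmonic functions: since $u_+$ is harmonic on $M_\epsilon$ (a region lying between $\Gamma_\epsilon$ and $\Gamma_1$), its values on the inner curve $\Gamma_1$ are controlled by its boundary values on $\Gamma_\epsilon$. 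More precisely, I would decompose $f$ into Fourier modes on each component circle $S^1$ and solve the harmonic extension mode by mode; on each cone $C_{\alpha_i, \cdot}$ the harmonic functions in the Friedrichs domain are spanned by $r^{|n|/\alpha_i}e^{in\theta}$ for $n\neq 0$ and the constant for $n=0$, so the extension from radius $b_i\epsilon$ inward to radius $b_i$ attenuates the $n$th mode by a factor $(\epsilon)^{|n|/\alpha_i}\leq 1$. Summing over modes and using Parseval gives the bound. The global constant mode is preserved exactly, consistent with $L_\epsilon$ being the identity on $\mathcal K$.

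For the invariance of $\mathcal C\oplus L_0^2$ under $L_\epsilon$, I would argue that $L_\epsilon$ cannot create a globally constant component out of a function $f$ orthogonal to $\mathcal K$. The cleanest route is to observe that $\int_{\Gamma_1}L_\epsilon f$ equals $\int_{\Gamma_1}u_+$, and since $u_+$ is harmonic on $M_\epsilon$, Green's theorem relates this integral to a combination of boundary flux terms and the integral of $f$ over $\Gamma_\epsilon$. Because $f\in\mathcal C\oplus L_0^2$ integrates to zero over $\Gamma_\epsilon$, and because the harmonic extension preserves the total integral in the appropriate sense, the $\mathcal K$-component of $L_\epsilon f$ vanishes. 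I would make this precise by noting that the constant function $1$ is harmonic, so pairing $u_+$ against $1$ and integrating by parts over $M_\epsilon$ shows the mean of $u_+$ over the two boundary curves is linked; combined with $L_\epsilon$ being the identity on $\mathcal K$ and self-consistency of the decomposition, the orthogonal complement $\mathcal C\oplus L_0^2$ is preserved.

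The main obstacle I anticipate is handling the geometry of $M_\epsilon$ away from the conic tips: while the explicit mode-by-mode attenuation is transparent on each model cone $C_{\alpha_i, b_i\epsilon}$, the region $M_\epsilon$ also contains the bulk of $M$, where no such separation of variables is available and the harmonic extension mixes modes. To control the contribution from this bulk region, I would use that $M_\epsilon$ contains a fixed annular collar $(b_i\epsilon, 3/2)_r\times S^1_\theta$ near each removed disk on which the metric is exactly conic, and apply the maximum principle on $M_\epsilon$ as a whole rather than relying solely on the explicit cone solutions. The cleanest formulation uses that $L_\epsilon$ is a composition of the restriction-to-$\Gamma_\epsilon$-then-harmonically-extend map with evaluation on $\Gamma_1$, both of which are contractions in $L^2$ by the maximum principle; the subtlety is ensuring the Friedrichs condition at the tips makes the harmonic extension problem well-posed with a unique bounded solution, which is exactly what guarantees the operator $L_\epsilon$ is well-defined in the first place.
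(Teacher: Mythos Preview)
Your proposal has a genuine gap in the $L^2$ norm bound. The central confusion is about which domain $u_+$ lives on: it is the harmonic extension on $M_\epsilon$, the \emph{complement} of the cones, not on the cones $C_{\alpha_i,b_i\epsilon}$ themselves. In particular, $M_\epsilon$ does not contain the tips of the $k$ removed cones, so the Friedrichs condition gives you nothing there, and on the annular collar $\{b_i\epsilon\le r_i\le b_i\}$ the harmonic function $u_+$ is a combination of \emph{both} $r^{n/\alpha_i}e^{in\theta}$ and $r^{-n/\alpha_i}e^{in\theta}$, with coefficients determined by the global geometry of $M$ (compare the expansion (\ref{harmext}) later in the paper). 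Passing from $r=b_i\epsilon$ to $r=b_i$ increases $r$, so one of those two radial factors grows like $\epsilon^{-|n|/\alpha_i}$. There is therefore no mode-by-mode attenuation factor $\epsilon^{|n|/\alpha_i}$, and your Parseval argument collapses.

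Your fallback, the maximum principle, controls $\|u_+\|_{L^\infty}$ by $\|f\|_{L^\infty}$, but it does not produce an $L^2$--$L^2$ contraction; the statement ``evaluation on $\Gamma_1$ is a contraction in $L^2$ by the maximum principle'' is simply false in general. The paper's approach (following Wentworth) is different and is what you need: apply Green's identity to $u_+$ on the subregion of $M_\epsilon$ lying outside the circle $\{r_i=\rho\}$ to obtain
\[
0\;\le\;\int_{\{r_i\ge\rho\}}|\nabla u_+|^2\,dV\;=\;-\,\alpha_i\,\rho\int_{S^1} u_+\,\partial_{r_i}u_+\,d\theta\Big|_{r_i=\rho}\;=\;-\,\frac{\alpha_i\rho}{2}\,\frac{d}{d\rho}\int_{S^1}|u_+(\rho,\theta)|^2\,d\theta,
\]
which shows that $\rho\mapsto \int_{S^1}|u_+(\rho,\theta)|^2\,d\theta$ is nonincreasing on $[b_i\epsilon,b_i]$; summing over $i$ gives $\|L_\epsilon f\|_{L^2}\le\|f\|_{L^2}$. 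Your sketch for the invariance of $\mathcal C\oplus L_0^2$ via Green's theorem is along the right lines, but it too should be made precise using this integration-by-parts identity rather than appeals to ``self-consistency of the decomposition''.
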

Note that the second statement allows us to extend $L_{\epsilon}$ by continuity to an
operator on $L^{2}(\oplus_{i=1}^kS^{1})$ which is bounded in operator norm by $1$. We call this extension $L_{\epsilon}$ as well.

\begin{proof} The proofs are identical to the proofs of Lemmas 3.3 and 3.4 in \cite{w}, and
involve integration and Green's theorem; we will not repeat them here. 
\end{proof}

For each nonzero $n\in\mathbb Z$ and $j$ between 1 and $k$, let $f_{n,j}$ be the function on $\oplus_{i=1}^k S^1$ which is equal to $e^{in\theta}$ on the $j$th component of $S^1$ and zero on all the other components. The collection of $f_{n,j}$ forms a basis of $L_0^2$. Again following \cite{w} and using the same notation, we define auxiliary operators $T_{\epsilon}$, $U_{\epsilon}^{\pm}$,
$\mathcal V$, $|\mathcal V|$, and $B$ on $L_0^2$ by 
\[T_{\epsilon}f_{n,j}=\frac{\epsilon^{n/\alpha_j}-\epsilon^{-n/\alpha_j}}
{\epsilon^{n/\alpha_j}+\epsilon^{-n/\alpha_j}}f_{n,j};\ 
\ U_{\epsilon}^{\pm}f_{n,j}=
\frac{1}{2}(\epsilon^{n/\alpha_j}\pm\epsilon^{-n/\alpha_j})f_{n,j};\]
\[\mathcal Vf_{n,j}=\frac{n}{\alpha_j}f_{n,j};\ \ 
|\mathcal V|f_{n,j}=\frac{|n|}{\alpha_j}f_{n,j},\]
extending by linearity to $L_0^2$. Note that each of these operators is invertible on $L_0^2$; the inverses may be written down explicitly. Additionally, let $f_{0,j}$ be the function which is 1 on the $j$th component of $S^1$ and zero on the others; these form a basis for $\mathcal K\oplus\mathcal C$. Finally, let $\bar B$ be the multiplication operator on $L^2(\oplus_{i=1}^k S^1)$ with $\bar Bf_{n,j}=b_jf_{n,j}$ for both zero and nonzero $n$; it restricts to an operator on $L_0^2$ but not to an operator on $\mathcal C$, so write $B=(\Pi_0+\Pi_{\mathcal C})\bar B(\Pi_0+\Pi_{\mathcal C})$.

We now compute $R_\epsilon$ on $\mathcal C\oplus L_0^2$ in terms of these auxiliary operators.
Let $P_{M_{\epsilon}}$ be the Dirichlet-to-Neumann
operator for $M_{\epsilon}$ and $P_{C_{\epsilon}}$ be the Dirichlet-to-Neumann
operator for $\oplus_{i=1}^k C_{\alpha_i,b_i\epsilon}$; then $R_{\epsilon}=P_{M_{\epsilon}}+
P_{C_{\epsilon}}$. As before, by Stokes' theorem, 
each of these operators also preserves the orthogonal
decomposition. Moreover, we may compute $\mathcal P_{C_{\epsilon}}$ directly: suppose that the boundary data is $f_{m,j}$.
Then by separation of variables, the harmonic extension to $C_{\alpha_j,b_j\epsilon}$ is precisely
$(r/b_j\epsilon)^{|m|/\alpha_j}e^{im\theta}$, and the harmonic extensions on the other components are zero. The inward-pointing normal derivative
at the boundary is $-\partial_{r}$, so we conclude that
\[P_{C_{\epsilon}}f_{m,j}=-\frac{|m|}{b_i\epsilon\alpha_j}f_{m,j}
=-\frac{1}{b_j\epsilon}|\mathcal V|f_{m,j}.\]
By linearity, we see that $\epsilon\mathcal P_{C_{\epsilon}}=-B^{-1}|\mathcal V|$
on $L_{0}^{2}$; note that $P_{C_\epsilon}=0$ on $\mathcal C$, since the harmonic extension of a constant function on $\Gamma_{\epsilon,i}$ to $C_{\alpha_i,b_i\epsilon}$ which is in the Friedrichs domain is itself constant. It remains to analyze $\mathcal P_{M_{\epsilon}}$.

\begin{lemma} We have the following four equations for $\epsilon P_{M_\epsilon}$:
\begin{equation}\label{goalone}\Pi_0(\epsilon\mathcal P_{M_{\epsilon}})\Pi_0=B^{-1}
\Pi_0(\mathcal V(\mathcal T_{\epsilon})^{-1}
-\mathcal V(\mathcal U^{-}_{\epsilon})^{-1}L_{\epsilon})\Pi_0,\end{equation}
\begin{equation}\label{goaltwo}\Pi_{\mathcal C}(\epsilon\mathcal P_{M_{\epsilon}})\Pi_0=B^{-1}
\Pi_{\mathcal C}(\frac{1}{\log(1/\epsilon)}L_{\epsilon})\Pi_0,\end{equation}
\begin{equation}\label{goalthree}\Pi_0(\epsilon\mathcal P_{M_{\epsilon}})\Pi_{\mathcal C}=B^{-1}\Pi_0
(-\mathcal V(\mathcal U^-_{\epsilon})^{-1}L_{\epsilon})\Pi_{\mathcal C},\end{equation}
\begin{equation}\label{goalfour}\Pi_{\mathcal C}(\epsilon\mathcal P_{M_{\epsilon}})\Pi_{\mathcal C}=B^{-1}
\Pi_{\mathcal C}(\frac{1}{\log(1/\epsilon)}(L_{\epsilon}-Id))\Pi_{\mathcal C}.\end{equation}
\end{lemma}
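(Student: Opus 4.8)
The plan is to reduce all four identities to a single separation-of-variables computation carried out in the conical collars, where $P_{M_\epsilon}$ is completely explicit. The key geometric point is that, for $\epsilon<1$, both the boundary circle $\Gamma_{\epsilon,j}=\{r=b_j\epsilon\}$ and the curve $\{r=b_j\}$ lie inside the model conical neighborhood $(0,3/2)_r\times S^1_\theta$ (recall $b_j\le1<3/2$), so the annulus $\{b_j\epsilon\le r\le b_j\}$ is a flat truncated cone sitting inside $M_\epsilon$. If $u_+$ is the harmonic extension of the boundary data $f$ to $M_\epsilon$, then on this annulus $u_+$ is harmonic with boundary values $f$ on the inner circle and $L_\epsilon f$ on the outer circle, by the definition of $L_\epsilon$. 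Because the annulus is a product, $u_+$ is given there \emph{exactly} by its Fourier series: on the $(n,j)$ mode with $n\ne0$ it is a combination of $r^{n/\alpha_j}$ and $r^{-n/\alpha_j}$, and on the $n=0$ mode it is $A_j+B_j\log r$. Consequently $u_+$ in the collar, and in particular the inward normal derivative $\partial_r u_+|_{\Gamma_\epsilon}=P_{M_\epsilon}f$ (the normal $+\partial_r$ points into $M_\epsilon$, opposite to the $-\partial_r$ used for the cone), is determined by the two circles' data $f$ and $L_\epsilon f$ alone; all global information about $M_\epsilon$ enters only through $L_\epsilon$.

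First I would isolate a single nonzero mode $f_{n,j}$. Writing $u_+=A r^{n/\alpha_j}+Br^{-n/\alpha_j}$ on the annulus, setting $X=A(b_j\epsilon)^{n/\alpha_j}$ and $Y=B(b_j\epsilon)^{-n/\alpha_j}$, and using $b_j/(b_j\epsilon)=1/\epsilon$, the two Dirichlet conditions become
\[X+Y=\hat f_{n,j},\qquad X\epsilon^{-n/\alpha_j}+Y\epsilon^{n/\alpha_j}=(\widehat{L_\epsilon f})_{n,j},\]
while $\epsilon\,\partial_r u_+|_{\Gamma_\epsilon}=\frac{1}{b_j}\frac{n}{\alpha_j}(X-Y)$. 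Solving the $2\times2$ system and writing $\epsilon^{n/\alpha_j}\pm\epsilon^{-n/\alpha_j}$ in terms of $U_\epsilon^\pm$ gives $X-Y=(U_\epsilon^-)^{-1}(U_\epsilon^+ f-L_\epsilon f)$, so that $\epsilon P_{M_\epsilon}=B^{-1}\mathcal V(U_\epsilon^-)^{-1}(U_\epsilon^+-L_\epsilon)$ on these modes. The one nontrivial algebraic step is to recognize $(U_\epsilon^-)^{-1}U_\epsilon^+=T_\epsilon^{-1}$, which turns the self-interaction term into $\mathcal V T_\epsilon^{-1}$ and leaves $-\mathcal V(U_\epsilon^-)^{-1}L_\epsilon$. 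The $n=0$ mode is easier: the two-point problem for $A_j+B_j\log r$ gives $B_j=((\widehat{L_\epsilon f})_{0,j}-\hat f_{0,j})/\log(1/\epsilon)$ and $\epsilon\,\partial_r u_+|_{\Gamma_\epsilon}=B_j/b_j$, producing the factor $1/\log(1/\epsilon)$ and the combination $L_\epsilon-\mathrm{Id}$.

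The four equations are then obtained by sorting this one computation according to whether the input and output Fourier data are constant ($n=0$, in $\mathcal K\oplus\mathcal C$) or oscillatory ($n\ne0$, in $L_0^2$). For oscillatory input and output one gets (\ref{goalone}); for constant input and output, the $n=0$ computation with $\hat f_{0,j}=(\Pi_{\mathcal C}f)_{0,j}$ gives (\ref{goalfour}). The two off-diagonal blocks arise solely because $L_\epsilon$ need not respect the splitting of $\mathcal C\oplus L_0^2$: an oscillatory input has $\hat f_{0,j}=0$, so its $n=0$ output is carried entirely by $(\widehat{L_\epsilon f})_{0,j}$, yielding (\ref{goaltwo}); a constant input has $\hat f_{n,j}=0$ for $n\ne0$, so its oscillatory output is carried by $\Pi_0 L_\epsilon f$, yielding (\ref{goalthree}) --- and here the $U_\epsilon^+$ term drops out because $U_\epsilon^+$ sees only the vanishing $L_0^2$-part of $f$.

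The step I expect to require the most care is the bookkeeping of $B$ and $\Pi_{\mathcal C}$ in the two $n=0$ blocks, since the plain multiplier $\bar B$ does not preserve $\mathcal C$. I would make explicit that the correct $L^2$ structure is the boundary arc-length measure, with weight $\alpha_j b_j$ on the $j$th circle; with respect to it the decomposition (\ref{orthodecomp}) is orthogonal, the Dirichlet-to-Neumann operators are self-adjoint, and Green's theorem gives $\sum_j\alpha_j b_j(\epsilon P_{M_\epsilon}f)_{0,j}=0$, i.e.\ the $n=0$ part $g$ of the output already lies in $\mathcal C$. The collar computation gives $\bar B g=\frac{1}{\log(1/\epsilon)}h$, where $h$ is the $n=0$ data of $(L_\epsilon-\mathrm{Id})f$ (resp.\ of $L_\epsilon f$); since $g\in\mathcal C$ we then have $Bg=\Pi_{\mathcal C}\bar B g=\frac{1}{\log(1/\epsilon)}\Pi_{\mathcal C}h$, and inverting $B$ on $\mathcal C$ reproduces exactly the right-hand sides of (\ref{goaltwo}) and (\ref{goalfour}). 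Thus the compression in $B=(\Pi_0+\Pi_{\mathcal C})\bar B(\Pi_0+\Pi_{\mathcal C})$ is harmless precisely because it acts on the output side, where the image already sits in $\mathcal C$; on the $L_0^2$ blocks $B$ agrees with $\bar B$ and no such issue arises. Matching the overall sign throughout is just the bookkeeping of the inward-normal convention, consistent with the computation of $P_{C_\epsilon}$.
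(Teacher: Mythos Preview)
Your proposal is correct and follows essentially the same route as the paper: both arguments expand the harmonic extension $u_+$ by separation of variables in the conical collars, read off the two sets of Fourier data at $r=b_j\epsilon$ and $r=b_j$ (the latter being $L_\epsilon f$), and compute $\partial_r u_+|_{\Gamma_\epsilon}$ mode by mode. The paper verifies each of the four identities by plugging in $f_{m,j}$ (nonzero $m$) and $f_{0,j}$ separately and comparing both sides in terms of the coefficients $a_{n,i}^\pm$; you instead solve the two-point annular Dirichlet problem once, obtain the single formula $\epsilon P_{M_\epsilon}=B^{-1}\mathcal V(U_\epsilon^-)^{-1}(U_\epsilon^+-L_\epsilon)$ on nonzero modes (using $(U_\epsilon^-)^{-1}U_\epsilon^+=T_\epsilon^{-1}$) together with the $\log r$ computation on the zero mode, and then project---but this is only a reorganization, not a different argument. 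Your explicit treatment of the $B$ versus $\bar B$ compression on the $\mathcal C$ block is a point the paper leaves implicit; it is correct and worth keeping.
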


\begin{proof} First we prove (\ref{goalone}) and (\ref{goaltwo}). It is enough to prove the statements for the action of the operators on each $f_{m,j}$ for nonzero $m$. Let $u_{+}$ be the solution of the Dirichlet
problem on $M_{\epsilon}$ with boundary data equal to $f_{m,j}$. Since $u_{+}$ is harmonic, we may write that in a radius $3/2$-neighborhood of each conic point, where $r_i$ is the radial coordinate near $P_i$, that
\begin{equation}\label{harmext}
u_{+}(r,\theta)=\sum_{n\neq 0,\ 1\leq i\leq k}(a_{n,i}^{+}(\frac{r_i}{b_i\epsilon})^{-n/\alpha_i}
+a_{n,i}^{-}(\frac{r_i}{b_i\epsilon})^{-n/\alpha_i})f_{n,i}+\sum_{i=1}^k(a_{0,i}^++a_{0,i}^-\log\frac{r_i}{b_i\epsilon})f_{0,i}. \end{equation}
Since $u_{+}(b_i\epsilon,\theta)=f_{m,j}$, we see that $a_{n,i}^{+}+a_{n,i}^{-}=
\delta_{(n,i),(m,j)}$ for nonzero $m$, and that $a_{0,i}^+=0$, for each $i$. Therefore $u_{+}$ may be re-written
\begin{equation}\label{harmextsimp}
u_{+}(r,\theta)=(\frac{r_j}{b_j\epsilon})^{-m/\alpha_j}f_{m,j}+
\sum_{n\neq 0,\ 1\leq i\leq k}a_{n,i}^{+}((\frac{r_i}{b_i\epsilon})^{n/\alpha_i}-
(\frac{r_i}{b_i\epsilon})^{-n/\alpha_i})f_{n,i}+\sum_{i=1}^ka_{0,i}^-\log\frac{r_i}{b_i\epsilon}f_{0,i}.
\end{equation}
Now $P_{M_{\epsilon}}f_{m,j}=(\partial_{r_i}(u_{+}))|_{r_i=b_i\epsilon}$,
and $L_{\epsilon}f_{m,j}=u_{+}|_{r_i=b_i}$. Since the other operators
in (\ref{goalone}) are multiplication operators on each mode, and the projection operators single out the components $f_{m,j}$ with nonzero $m$, it is easy
to compute both sides of (\ref{goalone}) in terms of the $b_{n}^{+}$. We find
that both sides of (\ref{goalone}) are equal to
\[-\frac{m}{\alpha_jb_j}f_{m,j}+\sum_{n\neq 0,\ 1\leq i\leq k}\frac{2n}{\alpha_ib_i}a_{n,i}^{+}f_{n,i}.\]
This completes the proof of (\ref{goalone}). Similarly, from (\ref{harmextsimp}), we compute that both sides of (\ref{goaltwo}) are equal to
\[\sum_{i=1}^ka_{0,i}^-\frac{1}{b_i}f_{0,i},\] which verifies (\ref{goaltwo}).

On the other hand, to prove (\ref{goalthree}) and (\ref{goalfour}), it is enough to prove the statements for the action of the operators on each $f_{0,j}$. As before, let $u_+$ be the solution of the Dirichlet problem on $M_\epsilon$ with boundary data $f_{0,j}$; then we may still write (\ref{harmext}), and the boundary conditions allow us to simplify to:
\begin{equation}\label{harmextsimptwo}
u_{+}(r,\theta)=f_{0,j}+
\sum_{n\neq 0,\ 1\leq i\leq k}a_{n,i}^{+}((\frac{r_i}{b_i\epsilon})^{n/\alpha_i}-
(\frac{r_i}{b_i\epsilon})^{-n/\alpha_i})f_{n,i}+\sum_{i=1}^ka_{0,i}^-\log\frac{r_i}{b_i\epsilon}f_{0,i}.
\end{equation}
We may then compute, as before, that both sides of (\ref{goalthree}) are equal to 
\[\sum_{n\neq 0,\ 1\leq i\leq k}\frac{2n}{\alpha_ib_i}a_{n,i}^{+}f_{n,i}\]
and both sides of (\ref{goalfour}) are
\[\sum_{i=1}^ka_{0,i}^-\frac{1}{b_i}f_{0,i}.\]
This completes the proof of the lemma.
\end{proof}

Combining the lemma with the preceding remarks on $P_{C_\epsilon}$ gives a formula for $\epsilon R_\epsilon$ in terms of $L_\epsilon$ and the auxiliary operators; see Proposition \ref{formula} below.

\subsection{Determinant asymptotics}
In order to compute the asymptotics of the determinant of $\epsilon R_{\epsilon}$, we view $\epsilon R_\epsilon$ as the sum of a diagonal operator $A_\epsilon$ and a 'small perturbation' $K_\epsilon$. The following proposition is immediate from the previous lemma and discussion:
\begin{proposition}\label{formula} Let $A_\epsilon$ and $K_\epsilon$ be operators on $\mathcal C\oplus L_0^2$ given by:
\[BA_\epsilon=-2\Pi_0|\mathcal V|\Pi_0-\Pi_{\mathcal C}\frac{1}{\log(1/\epsilon)}\Pi_{\mathcal C},\]
\[BK_\epsilon=\Pi_0(|\mathcal V|+\mathcal V(\mathcal T_{\epsilon})^{-1}-\mathcal V(\mathcal U_\epsilon^-)^{-1}L_\epsilon)\Pi_0+\Pi_{\mathcal C}(\frac{1}{\log(1/\epsilon)}L_{\epsilon})\Pi_0\]
\[+\Pi_0
(-\mathcal V(\mathcal U^-_{\epsilon})^{-1}L_{\epsilon})\Pi_{\mathcal C}+\Pi_{\mathcal C}(\frac{1}{\log(1/\epsilon)}L_\epsilon)\Pi_{\mathcal C}.\]
Then $\epsilon R_\epsilon=A_\epsilon+K_\epsilon$.
\end{proposition}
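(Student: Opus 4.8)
The plan is to assemble the pieces already in hand rather than to derive anything new: Proposition \ref{formula} is a reorganization of the preceding lemma together with the computation of $P_{C_\epsilon}$, so I would prove it as a bookkeeping identity. I would start from $\epsilon R_\epsilon=\epsilon P_{M_\epsilon}+\epsilon P_{C_\epsilon}$ on $\mathcal C\oplus L_0^2$ and left-multiply by $B$. The point is that each of the four formulas (\ref{goalone})--(\ref{goalfour}) for $\epsilon P_{M_\epsilon}$, as well as the identity $\epsilon P_{C_\epsilon}=-B^{-1}|\mathcal V|$ on $L_0^2$, carries an explicit factor of $B^{-1}$ in front, which the left multiplication by $B$ will cancel.

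The one structural fact I would record first is that $B$ is block diagonal with respect to the splitting $\mathcal C\oplus L_0^2$, and in particular commutes with $\Pi_0$ and $\Pi_{\mathcal C}$. This follows from the definition $B=(\Pi_0+\Pi_{\mathcal C})\bar B(\Pi_0+\Pi_{\mathcal C})$ together with the observation that $\bar B$ is diagonal in the basis $\{f_{n,j}\}$: it preserves $L_0^2$ and also preserves $\mathcal K\oplus\mathcal C=\mathrm{span}\{f_{0,j}\}$, so the cross terms $\Pi_0\bar B\Pi_{\mathcal C}$ and $\Pi_{\mathcal C}\bar B\Pi_0$ vanish and $B=\Pi_0\bar B\Pi_0+\Pi_{\mathcal C}\bar B\Pi_{\mathcal C}$. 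Since $b_j>0$, $B$ is invertible on each block and $BB^{-1}=Id$ on $\mathcal C\oplus L_0^2$.

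With this in place, I would expand $B\epsilon R_\epsilon=\sum_{a,b}\Pi_a B\epsilon R_\epsilon\Pi_b$ over $a,b\in\{0,\mathcal C\}$, commute $B$ past the projections, and substitute. In each of the four blocks the $P_{M_\epsilon}$ contribution loses its $B^{-1}$ and becomes exactly the bracketed expression of the corresponding equation (\ref{goalone})--(\ref{goalfour}), while the only contribution of $P_{C_\epsilon}$ is $-\Pi_0|\mathcal V|\Pi_0$ in the $\Pi_0\Pi_0$ block, since $P_{C_\epsilon}$ annihilates $\mathcal C$ and preserves the decomposition. Collecting the blocks gives $B\epsilon R_\epsilon$ explicitly, and comparing with $BA_\epsilon+BK_\epsilon$ term by term reduces to four block identities; the only one with anything to check is the $\Pi_0\Pi_0$ block, where the $-|\mathcal V|$ produced by $P_{C_\epsilon}$ is split as $-2|\mathcal V|$ (placed in $A_\epsilon$) plus $+|\mathcal V|$ (placed in $K_\epsilon$), the two halves summing correctly to the computed block.

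There is no substantive obstacle here, as the statement is an algebraic identity and the verification is routine. If I had to isolate the one point deserving care, it is that the splitting into $A_\epsilon$ and $K_\epsilon$ is a deliberate choice rather than something forced by the algebra: one adds and subtracts $|\mathcal V|$ on $L_0^2$ so that $A_\epsilon$ is the clean diagonal operator equal to $-2B^{-1}|\mathcal V|$ on $L_0^2$ and $-B^{-1}\tfrac{1}{\log(1/\epsilon)}$ on $\mathcal C$, whose determinant can be computed directly, while $K_\epsilon$ is left as the remainder. The fact that this $K_\epsilon$ is genuinely small -- that $|\mathcal V|+\mathcal V(\mathcal T_\epsilon)^{-1}\to 0$ and that $L_\epsilon$ is a contraction by Proposition \ref{qinverse} -- is what the subsequent determinant analysis exploits, but it plays no role in establishing the identity itself.
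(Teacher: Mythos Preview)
Your proposal is correct and is precisely the verification the paper has in mind: the paper states that the proposition is ``immediate from the previous lemma and discussion,'' and you have spelled out that immediacy by splitting $B\epsilon R_\epsilon$ into its four blocks, substituting (\ref{goalone})--(\ref{goalfour}) and the formula $\epsilon P_{C_\epsilon}=-B^{-1}|\mathcal V|$ on $L_0^2$, and matching against $BA_\epsilon+BK_\epsilon$. Your observation that $B$ is block diagonal with respect to $\mathcal C\oplus L_0^2$ (so that the factors $B^{-1}$ in the lemma cancel cleanly) is the only nontrivial point, and your justification of it is correct.
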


The key to the proof of Theorem \ref{asymplogdet} is that the asymptotics of the determinant of $\epsilon R_{\epsilon}$ are closely related to those of the determinant of $A_{\epsilon}$, which are easy to compute. The following lemma, whose proof is deferred until the next section, provides the necessary comparison:

\begin{lemma}\label{difference}As $\epsilon\rightarrow 0$,
\[\log\det\epsilon R_\epsilon-\log\det A_\epsilon=o(1).\]
Moreover, when $k=1$, $b_1=1$, and $\epsilon\leq 2^{-\alpha_1}$,
\[|\log\det\epsilon R_\epsilon-\log\det A_\epsilon|\leq 6\epsilon^{1/\alpha_1}.\]
\end{lemma}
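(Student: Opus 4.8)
The plan is to reduce the difference of zeta-regularized determinants to the logarithm of a single Fredholm determinant and then show that the relevant operator converges to the identity in trace norm. Since $\epsilon R_\epsilon=A_\epsilon+K_\epsilon$ with $A_\epsilon$ invertible on $\mathcal C\oplus L_0^2$, I would first observe that $K_\epsilon$ is a smoothing, hence trace-class, perturbation. Indeed, the formula of Proposition \ref{formula} exhibits $\Pi_0K_\epsilon\Pi_0$ as a rapidly decaying operator, because the combination $|\mathcal V|+\mathcal V(\mathcal T_\epsilon)^{-1}$ and the factor $\mathcal V(\mathcal U_\epsilon^-)^{-1}$ have matrix entries that decay faster than any power of $n$, while every block touching $\mathcal C$ has rank at most $k-1$. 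Thus $A_\epsilon^{-1}K_\epsilon$ is trace class, and along the segment $A_\epsilon+tK_\epsilon$, $t\in[0,1]$, the variational formula reads $\frac{d}{dt}\log\det(A_\epsilon+tK_\epsilon)=\mathrm{Tr}\big((A_\epsilon+tK_\epsilon)^{-1}K_\epsilon\big)$, with \emph{no} Wodzicki-residue correction precisely because $K_\epsilon$ is smoothing (the residue vanishes on trace-class operators). Once $\|A_\epsilon^{-1}K_\epsilon\|$ is small the path stays invertible, and integrating gives
\[\log\det\epsilon R_\epsilon-\log\det A_\epsilon=\log\det\nolimits_F\!\big(I+A_\epsilon^{-1}K_\epsilon\big).\]

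It then suffices to prove $\|A_\epsilon^{-1}K_\epsilon\|_1\to 0$, since the elementary bound $|\log\det\nolimits_F(I+T)|\le \|T\|_1/(1-\|T\|_1)$ holds for $\|T\|_1<1$. I would estimate $A_\epsilon^{-1}K_\epsilon$ block by block with respect to $\mathcal C\oplus L_0^2$. The blocks landing in $L_0^2$ are purely computational: substituting the definitions of $\mathcal T_\epsilon$ and $\mathcal U_\epsilon^-$ identifies $A_\epsilon^{-1}(|\mathcal V|+\mathcal V(\mathcal T_\epsilon)^{-1})$ with the diagonal operator of entries $\epsilon^{2|n|/\alpha_j}/(1-\epsilon^{2|n|/\alpha_j})$, and $A_\epsilon^{-1}\mathcal V(\mathcal U_\epsilon^-)^{-1}$ with the diagonal operator of entries of size $\epsilon^{|n|/\alpha_j}/(1-\epsilon^{2|n|/\alpha_j})$, the latter composed with the contraction $L_\epsilon$ (using $\|L_\epsilon\|\le1$ from Proposition \ref{qinverse}). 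Summing the resulting geometric series under the hypothesis $\epsilon\le2^{-\alpha_1}$ bounds the trace norm of these blocks by a fixed power of $\epsilon$. In the case $k=1$ we have $\mathcal C=0$, so these are the only contributions, and a careful accounting of the two series together with the Fredholm bound above is what produces the explicit constant in $6\epsilon^{1/\alpha_1}$.

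The remaining, and genuinely analytic, difficulty is the $\mathcal C$-blocks, which I expect to be the main obstacle. A direct computation shows that the $\log(1/\epsilon)$ factors in $A_\epsilon^{-1}$ and in the $\mathcal C$-entries of $K_\epsilon$ cancel, so that the restriction of $A_\epsilon^{-1}K_\epsilon$ to $\mathcal C\to\mathcal C$ equals $-\Pi_{\mathcal C}L_\epsilon\Pi_{\mathcal C}$ and its $L_0^2\to\mathcal C$ block equals $-\Pi_{\mathcal C}L_\epsilon\Pi_0$. Hence the entire $\mathcal C$-contribution to the trace norm is controlled once one shows $\Pi_{\mathcal C}L_\epsilon\to 0$ on $\mathcal C\oplus L_0^2$. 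The hard part is $\Pi_{\mathcal C}L_\epsilon\Pi_{\mathcal C}\to 0$: for locally constant boundary data integrating to zero, $L_\epsilon f$ is the restriction to $\Gamma_1$ of the harmonic extension $u_+$, which near each cone has the form $a_{0,i}^+ +a_{0,i}^-\log(r_i/b_i\epsilon)$ plus exponentially small modes. The flux balance $\sum_i\alpha_i a_{0,i}^-=0$ together with the logarithmic capacity system governing the $a_{0,i}^-$ forces $a_{0,i}^-=O(1/\log(1/\epsilon))$, so the charges vanish and $u_+$ converges to a \emph{global} constant; consequently the integrate-to-zero component of $L_\epsilon f$ tends to $0$. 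The companion estimate $\Pi_{\mathcal C}L_\epsilon\Pi_0\to0$ is easier, since oscillatory data has already decayed to size $O(\epsilon^{m/\alpha_j})$ before reaching the bulk.

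In summary, I would carry out the steps in the order: (1) verify $K_\epsilon$ is smoothing and set up the anomaly-free reduction to $\log\det\nolimits_F(I+A_\epsilon^{-1}K_\epsilon)$; (2) dispatch the $L_0^2$-blocks by explicit geometric-series estimates, which already settle the $k=1$ case and its error bound; and (3) establish $\Pi_{\mathcal C}L_\epsilon\Pi_{\mathcal C}\to0$ through the matched-asymptotics/capacity analysis of the logarithmic Dirichlet problem on $M_\epsilon$. Granting step (3), every block of $A_\epsilon^{-1}K_\epsilon$ vanishes in trace norm, whence $\log\det\nolimits_F(I+A_\epsilon^{-1}K_\epsilon)=o(1)$ and the lemma follows. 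The capacity analysis in step (3) is where essentially all the work lies; everything else is bookkeeping around the explicit auxiliary operators.
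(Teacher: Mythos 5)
Your proposal has the same skeleton as the paper's proof: the decomposition $\epsilon R_\epsilon=A_\epsilon+K_\epsilon$ from Proposition \ref{formula}, the variational formula for the zeta determinant under a trace-class perturbation (the paper integrates $\mathrm{Tr}\bigl((A_\epsilon+tK_\epsilon)^{-1}K_\epsilon\bigr)$ and bounds it by $\|A_\epsilon^{-1}\|\cdot\mathrm{Tr}|K_\epsilon|/(1-\|K_\epsilon A_\epsilon^{-1}\|)$, which is a coarser version of your Fredholm bound $|\log\det_F(I+T)|\le\|T\|_1/(1-\|T\|_1)$), the explicit eigenvalue computations for $|\mathcal V|+\mathcal V(\mathcal T_\epsilon)^{-1}$ and $\mathcal V(\mathcal U_\epsilon^-)^{-1}$ combined with $\|L_\epsilon\|\le 1$ for all blocks landing in $L_0^2$, and the reduction of the whole lemma to showing $\|\Pi_{\mathcal C}L_\epsilon\Pi_{\mathcal C}\|\to 0$. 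Your diagonal computations are correct, and your observation that the $\log(1/\epsilon)$ factors cancel in the $\mathcal C$-blocks matches the paper's formula for $BK_\epsilon A_\epsilon^{-1}B^{-1}$ in Proposition \ref{pertinvert}.

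The genuine divergence, and the gap, is in your step (3). The paper proves $\|\Pi_{\mathcal C}L_\epsilon\Pi_{\mathcal C}\|\to 0$ by a soft compactness argument: a purported sequence of counterexamples $g_i\in\mathcal C$ is uniformly bounded in $L^\infty$ (locally constant, unit $L^2$ norm), so by the maximum principle and Arzela--Ascoli the harmonic extensions subconverge locally uniformly to a bounded harmonic function on $M$, which is constant, a contradiction. Your quantitative alternative is reasonable in spirit, but the stated inference --- ``$a_{0,i}^-=O(1/\log(1/\epsilon))$, so the charges vanish and $u_+$ converges to a global constant'' --- does not follow. The quantity you must kill is built from the circle averages $c_i+a_{0,i}^-\log(1/\epsilon)$ of $u_+$ on $\Gamma_1$, and the charge contribution $a_{0,i}^-\log(1/\epsilon)$ is $O(1)$, not $o(1)$: the decay of the charges is automatic from boundedness and by itself says nothing about $\Pi_{\mathcal C}L_\epsilon f$. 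What is needed is that the $k$ numbers $c_i+a_{0,i}^-\log(1/\epsilon)$ all tend to one common constant, and that is exactly the output of solving the capacity/matching system (outer approximation by a constant plus Green's functions with logarithmic poles, matching at $r_i\sim b_i\epsilon$, error control) --- i.e., it is the conclusion, not a consequence of charge decay. So step (3) must either be executed as a genuine matched-asymptotics construction (more work than the paper's argument, but it would yield a rate $O(1/\log(1/\epsilon))$ that the paper's soft argument does not give) or be replaced by the compactness argument. A second, smaller issue: in your normalization the mixed block of $A_\epsilon^{-1}K_\epsilon$ is $-\Pi_{\mathcal C}L_\epsilon\Pi_0$ with no surviving $1/\log(1/\epsilon)$ factor, and calling it ``easier'' still presupposes a modewise bound of the type $\|L_\epsilon f_{m,j}\|\le C\epsilon^{|m|/\alpha_j}$ uniform in $m$, which does not follow from $\|L_\epsilon\|\le 1$ alone; you can sidestep this entirely by working with $\det_F(I+K_\epsilon A_\epsilon^{-1})$ instead of $\det_F(I+A_\epsilon^{-1}K_\epsilon)$ (the two Fredholm determinants are equal), since in $K_\epsilon A_\epsilon^{-1}$ that block retains the factor $\frac{1}{\log(1/\epsilon)}|\mathcal V|^{-1}$ and vanishes for free.
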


We now compute the asymptotics of the determinant of $A_{\epsilon}$. The eigenvalues of $2B^{-1}|\mathcal V|$ on $L_0^2$ are precisely $2m/(b_i\alpha_i)$
for each $m$ in $\mathbb N$ and each $i$ between 1 and $k$, each with multiplicity 2.
The zeta function for $2|\mathcal V|$ is therefore
\[\sum_{i=1}^{k}2(\frac{2}{b_i\alpha_i})^{-s}\zeta_{Riem}(s).\]
Taking the derivative at zero and using the special values of the Riemann
zeta function, we compute that the log-determinant of $A_\epsilon$ on $L_0^2$ is $\sum_{i=1}^k\log\pi b_i\alpha_i$. Moreover, to compute the log-determinant of $A_\epsilon$ on $\mathcal C$, first let $\hat B$ be the $(k-1)$-dimensional matrix $\Pi_C B\Pi_C$; it is easy to see that $\hat B$ is invertible with all positive eigenvalues. Then the log-determinant of $A_\epsilon$ on $\mathcal C$ (note that we take absolute values of the eigenvalues first) is just \[\log\det\hat B^{-1} + (k-1)\log(-(\log\epsilon)^{-1})=\log\det\hat B^{-1} -(k-1)\log\log(1/\epsilon).\]

Next, we compute $\log\det\hat B$. For each $i$ between 1 and $k-1$, we let $g_i=f_{0,i}-f_{0,k}$. Then we have
\[\hat Bg_i=\Pi_{\mathcal C}(b_if_{0,i}-b_kf_{0,k})=b_ig_i-\frac{b_i-b_k}{k}\sum_{j=i}^kg_j.\]
So we must take the log of the determinant of a matrix whose $(i,j)$ entry is \[\frac{b_k-b_j}{k}+b_j\delta_{ij}.\]
The determinant must be a polynomial in the $\{b_i\}$ of joint degree $k-1$. Since the numbering was arbitrary, it must be symmetric. Moreover, by inspection, it has degree at most 1 in each of the $b_i$ between 1 and $k-1$, and hence by symmetry also in $b_k$. These observations force the polynomial to be $\beta$ times a constant which depends only on $k$. But if each $b_i=1$, then $\hat B=Id$ and the determinant is 1; therefore, the constant is 1, and $\log\det\hat B=\log\beta$. We have now shown that
\begin{equation}\label{almost}\log\det A_{\epsilon}=-(k-1)\log\log(1/\epsilon)+\sum_{i=1}^k\log\pi b_i\alpha_i-\log\beta.\end{equation}

Finally, we need to go from $\log\det\epsilon R_{\epsilon}$ to
$\log\det R_{\epsilon}$. The argument is similar to Proposition \ref{scaling};
since the eigenvalues scale by $\epsilon$ in this case, as opposed to
$\epsilon^{-2}$, one can easily compute
\begin{equation}\label{factorepsilon}\log\det\epsilon R_{\epsilon}=\log\det R_{\epsilon}+\zeta_{R_{\epsilon}}(0)
\log\epsilon.\end{equation}
We must still compute $\zeta_{R_{\epsilon}}(0)$, but it turns out to be simple:
\begin{proposition} For any $\epsilon$, $\zeta_{R_{\epsilon}}(0)=-1$.
\end{proposition}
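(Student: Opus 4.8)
The plan is to compute $\zeta_{R_\epsilon}(0)$ by combining the orthogonal decomposition (\ref{orthodecomp}) with the standard characterization of the zeta value at zero in terms of the heat trace, reducing everything to the explicit diagonal operator $A_\epsilon$. Since scaling the eigenvalues by $\epsilon$ leaves the value of the zeta function at $s=0$ unchanged (this is exactly the content of (\ref{factorepsilon}), where only the $\log\epsilon$ term is affected), it suffices to compute $\zeta_{\epsilon R_\epsilon}(0)$. Because $R_\epsilon$ preserves the splitting $\mathcal K\oplus\mathcal C\oplus L_0^2$, with $\mathcal K$ its kernel, the zeta function is a sum of the contribution from the finite-dimensional space $\mathcal C$ and the contribution from $L_0^2$, and I would treat these separately.

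First I would handle $\mathcal C$. On the $(k-1)$-dimensional space $\mathcal C$ the operator $\epsilon R_\epsilon$ has only nonzero eigenvalues, so its zeta function is a finite sum $\sum|\lambda|^{-s}$ whose value at $s=0$ is simply the number of terms, namely $k-1$. Next I would treat $L_0^2$. Here the relevant tool is the exact formula $\zeta_P(0)=a_1-\dim\ker P$, where $a_1$ is the constant ($t^0$) term in the small-time expansion of the heat trace of the first-order elliptic pseudodifferential operator $P=\epsilon R_\epsilon|_{L_0^2}$ on the one-dimensional manifold $\Gamma_\epsilon$. By Proposition \ref{formula}, on $L_0^2$ we have $\epsilon R_\epsilon=A_\epsilon+K_\epsilon$ with $A_\epsilon|_{L_0^2}=-2B^{-1}|\mathcal V|$, and the value $\zeta^{L_0^2}_{A_\epsilon}(0)$ is immediate from the computation already performed before (\ref{almost}): the zeta function of $2B^{-1}|\mathcal V|$ is $\sum_{i=1}^k 2(2/b_i\alpha_i)^{-s}\zeta_{Riem}(s)$, so its value at $s=0$ is $2\sum_{i=1}^k\zeta_{Riem}(0)=-k$.

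The main obstacle is to show that passing from $A_\epsilon$ to $A_\epsilon+K_\epsilon$ does not change this value. The key point is that $a_1$ is a local invariant determined by only finitely many terms of the complete symbol, and that $\dim\ker$ is unchanged (both operators are injective on $L_0^2$, since $\ker R_\epsilon=\mathcal K$). I would verify directly from the explicit formulas in Proposition \ref{formula} that $K_\epsilon|_{L_0^2}$ is smoothing: the diagonal factors $|\mathcal V|+\mathcal V(\mathcal T_\epsilon)^{-1}$ and $\mathcal V(\mathcal U_\epsilon^-)^{-1}$ have eigenvalues that decay exponentially in $|n|$ (because $\mathcal T_\epsilon^{-1}f_{n,j}\to-\mathrm{sign}(n)f_{n,j}$ and $\mathcal U_\epsilon^-$ grows like $\epsilon^{-|n|/\alpha_j}$), and composing with the operator $L_\epsilon$, which is bounded by Proposition \ref{qinverse}, preserves this decay. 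Since a smoothing perturbation alters neither the complete symbol to any order nor the heat coefficient $a_1$, and leaves the kernel trivial, I obtain $\zeta^{L_0^2}_{\epsilon R_\epsilon}(0)=\zeta^{L_0^2}_{A_\epsilon}(0)=-k$.

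Adding the two contributions gives $\zeta_{R_\epsilon}(0)=\zeta_{\epsilon R_\epsilon}(0)=(k-1)+(-k)=-1$, independently of $\epsilon$, as claimed. As a consistency check, this is equivalent to the statement that the full $t^0$ heat coefficient of $R_\epsilon$ vanishes: the modes in $\mathcal K$ contribute $1$, those in $\mathcal C$ contribute $k-1$, and those in $L_0^2$ contribute $-k$, summing to $0$, so that $\zeta_{R_\epsilon}(0)=0-\dim\ker R_\epsilon=-1$. I expect the only subtle step to be the smoothing claim for $K_\epsilon|_{L_0^2}$; everything else is either the finite-dimensional count on $\mathcal C$ or the Riemann-zeta evaluation already in hand.
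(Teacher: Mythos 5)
Your argument has a genuine gap at its foundation: the claim that $R_\epsilon$ preserves the splitting $\mathcal K\oplus\mathcal C\oplus L_0^2$ is false when $k\geq 2$. What is true (by Green's theorem) is that $R_\epsilon$ annihilates $\mathcal K$ and maps into $\mathcal K^{\perp}=\mathcal C\oplus L_0^2$; but within $\mathcal K^{\perp}$ the operator mixes $\mathcal C$ and $L_0^2$. This is visible in equations (\ref{goaltwo}) and (\ref{goalthree}): the blocks $\Pi_{\mathcal C}(\epsilon P_{M_\epsilon})\Pi_0$ and $\Pi_0(\epsilon P_{M_\epsilon})\Pi_{\mathcal C}$ are nonzero in general, since they involve $L_\epsilon$, and the harmonic extension of an oscillatory mode $f_{m,j}$ has a nonzero locally constant component along $\Gamma_1$. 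Consequently the ``restrictions'' $\epsilon R_\epsilon|_{\mathcal C}$ and $\epsilon R_\epsilon|_{L_0^2}$ that you analyze are not operators on those subspaces, the spectrum of $\epsilon R_\epsilon$ is not the union of the spectra of two blocks, and $\zeta_{\epsilon R_\epsilon}$ does not split as a sum of a $\mathcal C$-contribution and an $L_0^2$-contribution. In particular, your eigenvalue count of $k-1$ on $\mathcal C$ and your perturbation argument on $L_0^2$ are applied to objects that do not exist, and the cross terms $\Pi_{\mathcal C}(\frac{1}{\log(1/\epsilon)}L_\epsilon)\Pi_0$ and $\Pi_0(-\mathcal V(\mathcal U_\epsilon^-)^{-1}L_\epsilon)\Pi_{\mathcal C}$ of $K_\epsilon$ in Proposition \ref{formula} are never accounted for. (For $k=1$, where $\mathcal C=\{0\}$, your argument is essentially sound.)

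The natural repair is to run your perturbation idea on all of $\mathcal K^{\perp}$ at once: $A_\epsilon$ \emph{is} block diagonal, with $\zeta_{A_\epsilon}(0)=(k-1)+2k\,\zeta_{Riem}(0)=-1$ exactly as you compute, and one would then need to show that the full trace-class perturbation $K_\epsilon$, including its off-diagonal blocks, does not change the value of the zeta function at $s=0$, using Proposition \ref{pertinvert} to keep $A_\epsilon+tK_\epsilon$ invertible along the path. That is a $\zeta(0)$-analogue of Lemma \ref{difference}; it is plausible (via Duhamel on heat traces, or eigenvalue perturbation theory for self-adjoint trace-class perturbations), but it is precisely where the real work lies and cannot be waved through by locality of heat coefficients. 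Note also that invariance of the complete symbol requires $K_\epsilon$ to have a smooth Schwartz kernel in both variables, which needs $L_\epsilon$ to be smoothing (by interior elliptic regularity), not merely bounded as in Proposition \ref{qinverse}. Finally, be aware that the paper proves the proposition by an entirely different and much shorter route: it applies the gluing formula (\ref{bfkgluing}) to the rescaled metrics $\delta^2g$, matches coefficients of $\log\delta$ to obtain (\ref{zetavaluetwo}), and then invokes $\zeta(0)=\frac{1}{6}\chi$ (with Cheeger's conic corrections, which cancel) together with additivity of the Euler characteristic; no spectral analysis of $R_\epsilon$ itself is needed there.
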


\begin{proof} Fix $\epsilon$. The result follows by using homogeneity; we examine (\ref{bfkgluing}) and note that the same
gluing formula applies to $(M,\delta^{2}g)$ instead of $(M,g)$, for any $\delta$.
We then compute the variation of each of the terms.
For each of the $\log\det\Delta$ terms, we may apply Proposition \ref{scaling}.
$\log V$ changes by $2\log\delta$ and $\log(l(\partial M_\epsilon))$ changes by
$\log\delta$. Since the eigenvalues of $R_{\epsilon}$ scale by $\delta$,
$\log\det R_{\epsilon}$ changes by $-\zeta_{R}(0)\log\delta$. Putting
everything together, we have:
\[-2\zeta_{M}(0)\log\delta=-2\zeta_{M_{\epsilon}}(0)\log\delta
-2\sum_{i=1}^k\zeta_{C_{\alpha_i,b_i\epsilon}}(0)\log\delta+2\log\delta
-\log\delta-\zeta_{R_\epsilon}(0)\log\delta.\]
Since $\delta$ is arbitrary, we see that
\begin{equation}\label{zetavaluetwo}\zeta_{R_{\epsilon}}(0)=2\zeta_{M}(0)
-2\zeta_{M_{\epsilon}}(0)
-2\sum_{i=1}^k\zeta_{C_{\alpha_i,b_i\epsilon}}(0)-1.\end{equation}

Recall that for any surface $\Omega$ with or without boundary, $\zeta_{\Omega}(0)=\frac{1}{6}\chi(\Omega)$ \cite{r}. When $\Omega$
has conical singularities, work of Cheeger \cite{ch2} shows
that the same formula holds, with an additional contribution from
each conic singularity depending only on the cone angle $\alpha$.
Applying these formulas to (\ref{zetavaluetwo}), we see that all the
contributions from the conic singularities cancel, and we are left with
\[\zeta_{R_{\epsilon}}(0)=\frac{1}{3}(\chi(M)-\chi(M_{\epsilon})-\sum_{i=1}^k\chi(C_{\alpha_i,b_i\epsilon}))-1.\]
However, $\chi(M)=\chi(M_{\epsilon})+\sum_{i=1}^k\chi(C_{\alpha_i,b_i\epsilon})$ by the definition of the Euler characteristic, and hence $\zeta_{R_{\epsilon}}(0)=-1$.
\end{proof}
Combining this proposition with (\ref{almost}), (\ref{factorepsilon}), and Lemma \ref{difference} completes the proof
of Theorem \ref{asymplogdet}, and thus also finishes the proof of
Theorem \ref{truncated} and \ref{bettererror}.

\subsection{Proof of Lemma \ref{difference}}
\begin{proof}
The proof is based on similar arguments in \cite{l,w}. Since $\epsilon R_\epsilon=A_\epsilon+K_\epsilon$, we have formally that
\[\log\det\epsilon R_\epsilon-\log\det A_\epsilon=\int_0^1\frac{d}{dt}\log\det(A_\epsilon+tK_\epsilon)\ dt=\int_0^1Tr ((A_\epsilon+tK_\epsilon)^{-1}K_\epsilon)\ dt,\]
and hence, whenever $||K_{\epsilon}A_{\epsilon}^{-1}||$ has norm less than 1,
\[|\log\det\epsilon R_\epsilon-\log\det A_\epsilon|\leq \int_0^1||A_{\epsilon}^{-1}||\cdot||(Id + tK_{\epsilon}A_{\epsilon}^{-1})^{-1}||\cdot Tr |K_{\epsilon}|\ dt\]
\[\leq \int_0^1||A_{\epsilon}^{-1}||\cdot\frac{1}{1-||tK_{\epsilon}A_{\epsilon}^{-1}||}\cdot Tr |K_{\epsilon}|\ dt\]
\begin{equation}\label{comeback}\leq ||A_{\epsilon}^{-1}||\cdot\frac{1}{1-||K_{\epsilon}A_{\epsilon}^{-1}||}\cdot Tr |K_{\epsilon}|.\end{equation}
In order for this calculation to be justified, $K_{\epsilon}$ must be trace class and $||K_{\epsilon}A_{\epsilon}^{-1}||$ must have norm less than 1. As we analyze (\ref{comeback}), we will prove that both of these conditions hold for sufficiently small $\epsilon$. The first step in that analysis is an observation which is an immediate consequence of the definition of $A_{\epsilon}$:
\begin{proposition}\label{aepsilon} $A_{\epsilon}$ is invertible, and the norm of the inverse is bounded by $||B^{-1}||\log (1/\epsilon)$ for sufficiently small $\epsilon$. On the other hand, if $k=1$ and $b_1=1$, then $A_{\epsilon}=-2|\mathcal V|$ and hence $||A_{\epsilon}^{-1}||=\alpha_1/2.$ \end{proposition}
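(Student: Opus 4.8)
The plan is to exploit the fact that $A_\epsilon$ is block diagonal with respect to the splitting $\mathcal C\oplus L_0^2$ and to read off its spectrum explicitly on each block. Recall from Proposition \ref{formula} that $BA_\epsilon=-2\Pi_0|\mathcal V|\Pi_0-\frac{1}{\log(1/\epsilon)}\Pi_{\mathcal C}$. The operator $B$ acts as multiplication by $b_j$ on $L_0^2$ (hence diagonally in the basis $f_{n,j}$) and as the matrix $\hat B$ on $\mathcal C$; in particular $B$ preserves each summand and commutes both with $|\mathcal V|$ and with the projections $\Pi_0$ and $\Pi_{\mathcal C}$. Since $B$ is invertible—it is positive multiplication on $L_0^2$ and equals the invertible operator $\hat B$ on $\mathcal C$—I can multiply through by $B^{-1}$ and obtain $A_\epsilon$ one block at a time: on $L_0^2$ it is $-2B^{-1}|\mathcal V|$, and on $\mathcal C$ it is $-\frac{1}{\log(1/\epsilon)}\hat B^{-1}$.

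First I would establish invertibility. On $L_0^2$ the operator $A_\epsilon$ is diagonal in the $f_{n,j}$ basis with eigenvalues $-2|n|/(b_j\alpha_j)$, which are bounded away from zero uniformly in $\epsilon$; the smallest in modulus is $2/\max_j(b_j\alpha_j)$, attained at $|n|=1$. On the finite-dimensional space $\mathcal C$ the operator is $-\frac{1}{\log(1/\epsilon)}\hat B^{-1}$, which is invertible for every $\epsilon<1$ because $\hat B$ has strictly positive eigenvalues (as observed above), with explicit inverse $-\log(1/\epsilon)\hat B$. Hence $A_\epsilon$ is invertible, and $||A_\epsilon^{-1}||$ is the maximum of the two block norms: the $L_0^2$ block contributes the constant $\frac12\max_j(b_j\alpha_j)$, while the $\mathcal C$ block contributes $\log(1/\epsilon)\,||\hat B||$, which dominates once $\epsilon$ is small enough. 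Thus $||A_\epsilon^{-1}||=\log(1/\epsilon)\,||\hat B||$ for sufficiently small $\epsilon$.

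The one point requiring a moment's care—and the closest thing here to a genuine obstacle—is the comparison $||\hat B||\leq||B^{-1}||$ needed to reach the stated bound. Since $\hat B=\Pi_{\mathcal C}\bar B\Pi_{\mathcal C}$ and $0<b_j\leq1$, we have $||\hat B||\leq\max_jb_j\leq1$, whereas $||B^{-1}||\geq1/\min_jb_j\geq1$; combining these gives $||A_\epsilon^{-1}||\leq||B^{-1}||\log(1/\epsilon)$ for sufficiently small $\epsilon$, as claimed. Finally, the case $k=1$, $b_1=1$ falls straight out of the same computation: here $\mathcal C=\{0\}$, so only the $L_0^2$ block survives, and $B$ is the identity, whence $A_\epsilon=-2|\mathcal V|$ exactly. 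Its eigenvalues are $-2|n|/\alpha_1$, the smallest in modulus being $2/\alpha_1$, so $||A_\epsilon^{-1}||=\alpha_1/2$.
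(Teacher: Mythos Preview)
Your argument is correct and is precisely the direct computation the paper has in mind when it calls the proposition ``an immediate consequence of the definition of $A_{\epsilon}$.'' You have simply filled in the details: the block-diagonal form of $A_\epsilon$ with respect to $\mathcal C\oplus L_0^2$, the explicit spectra on each block, and the elementary inequality $\|\hat B\|\le 1\le\|B^{-1}\|$ (using $b_j\in(0,1]$) needed to pass from $\log(1/\epsilon)\|\hat B\|$ to the stated bound $\|B^{-1}\|\log(1/\epsilon)$.
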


Next, we show that $K_{\epsilon}$ is trace class and estimate its trace norm.

\begin{proposition}\label{ktraceclass} $K_\epsilon$ is trace class for all $\epsilon\in (0,1)$, and
\[\lim_{\epsilon\rightarrow 0}\log(1/\epsilon)\ Tr |K_\epsilon|=0.\]
Moreover, if $k=1$, $b_1=1$, and $\epsilon\leq 2^{-\alpha_1}$,
\[Tr |K_{\epsilon}|\leq \frac{12}{\alpha_1}\epsilon^{1/\alpha_1}.\]
\end{proposition}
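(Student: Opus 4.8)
The plan is to bound $Tr|K_\epsilon|$ by controlling $Tr|BK_\epsilon|$ (since $B^{-1}$ is bounded on $\mathcal C\oplus L_0^2$, one has $Tr|K_\epsilon|\leq \|B^{-1}\|\,Tr|BK_\epsilon|$) and estimating each of the four blocks of $BK_\epsilon$ from Proposition \ref{formula} separately. The starting point is the mode-by-mode identity $\Pi_0(|\mathcal V|+\mathcal V\mathcal T_\epsilon^{-1})\Pi_0=\Pi_0\mathcal V(\mathcal U_\epsilon^-)^{-1}Q_\epsilon\Pi_0$, where $Q_\epsilon$ is the diagonal operator with $Q_\epsilon f_{n,j}=\epsilon^{|n|/\alpha_j}f_{n,j}$. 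Using it, the first block of $BK_\epsilon$ becomes $\mathcal V(\mathcal U_\epsilon^-)^{-1}\Pi_0(Q_\epsilon-L_\epsilon)\Pi_0$ and the third block is $-\mathcal V(\mathcal U_\epsilon^-)^{-1}\Pi_0L_\epsilon\Pi_{\mathcal C}$. The operator $\mathcal V(\mathcal U_\epsilon^-)^{-1}$ is diagonal with eigenvalues of magnitude $\tfrac{|n|}{\alpha_j}\cdot\tfrac{2\epsilon^{|n|/\alpha_j}}{1-\epsilon^{2|n|/\alpha_j}}$, hence trace class with $Tr|\mathcal V(\mathcal U_\epsilon^-)^{-1}|=\sum_j\sum_{n\geq 1}\tfrac{4n}{\alpha_j}\tfrac{\epsilon^{n/\alpha_j}}{1-\epsilon^{2n/\alpha_j}}$, which is bounded by a constant multiple of $\epsilon^{1/\alpha_{\max}}$ (here $\alpha_{\max}=\max_i\alpha_i$). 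Since $\|Q_\epsilon\|\leq\epsilon^{1/\alpha_{\max}}\leq 1$ and $\|L_\epsilon\|\leq 1$ by Proposition \ref{qinverse}, the first and third blocks both have trace norm at most $2\,Tr|\mathcal V(\mathcal U_\epsilon^-)^{-1}|$, which is power-law small in $\epsilon$; multiplying by $\log(1/\epsilon)$ still tends to $0$.

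The two remaining blocks, $\Pi_{\mathcal C}\tfrac{1}{\log(1/\epsilon)}L_\epsilon\Pi_0$ and $\Pi_{\mathcal C}\tfrac{1}{\log(1/\epsilon)}L_\epsilon\Pi_{\mathcal C}$, have range in the finite-dimensional space $\mathcal C$ of dimension $k-1$, so their trace norms are at most $(k-1)$ times their operator norms. After multiplying by $\log(1/\epsilon)$ the prefactor $\tfrac{1}{\log(1/\epsilon)}$ is cancelled, so the proposition reduces to the key claim that $\Pi_{\mathcal C}L_\epsilon\to 0$ in operator norm on $\mathcal C\oplus L_0^2$. I would prove this by analyzing the $n=0$ (logarithmic) part of the harmonic extension $u_+$ defining $L_\epsilon$: writing $u_+$ near $P_i$ as in (\ref{harmext}), the $\mathcal C$-component of $L_\epsilon f$ is governed by the coefficients $a_{0,i}^-\log(1/\epsilon)$. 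For $f=f_{0,j}$ one computes, using the vanishing total flux $\sum_i\alpha_i a_{0,i}^-=0$ and the fact that the bulk value of $u_+$ converges to a constant, that $a_{0,i}^-\log(1/\epsilon)\to \tfrac{\alpha_j}{\sum_l\alpha_l}-\delta_{ij}$; hence $L_\epsilon f_{0,j}$ converges to the globally constant function $\tfrac{\alpha_j}{\sum_l\alpha_l}\mathbf 1\in\mathcal K$, so $\Pi_{\mathcal C}L_\epsilon\to 0$ on $\mathcal C$. For $f\in L_0^2$ the non-constant data excites the $n=0$ mode only through the decaying off-diagonal coupling, so $\Pi_{\mathcal C}L_\epsilon\to 0$ there as well. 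The main obstacle here is making the $L_0^2$ statement uniform over the infinitely many modes; since the range $\mathcal C$ is finite-dimensional this reduces to showing $\|\Pi_0L_\epsilon^\ast e\|\to 0$ for each of the finitely many basis vectors $e\in\mathcal C$, which I would obtain by the Green's-theorem and energy estimates of \cite{w,l}.

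For the sharp bound when $k=1$ and $b_1=1$, the space $\mathcal C$ is trivial and $B$ is the identity, so all but the first block vanish and $K_\epsilon=\mathcal V(\mathcal U_\epsilon^-)^{-1}(Q_\epsilon-L_\epsilon)$ exactly; thus $Tr|K_\epsilon|\leq Tr|\mathcal V(\mathcal U_\epsilon^-)^{-1}|\cdot\|Q_\epsilon-L_\epsilon\|$. Writing $q=\epsilon^{1/\alpha_1}\leq 1/2$, the trace-class factor is the explicit sum $\tfrac{4}{\alpha_1}\sum_{n\geq 1}\tfrac{nq^n}{1-q^{2n}}$, which I would bound by elementary geometric-series estimates. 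The operator $Q_\epsilon-L_\epsilon$ is exactly the deviation of $L_\epsilon$ from its infinite-cone model $Q_\epsilon$; on $f_{m,1}$ its diagonal entry is $-a_m^+(q^{-m}-q^m)$, where $a_m^+$ is the coefficient of the growing solution in the extension. The crux is the universal reflection estimate $|a_m^+|\lesssim q^{2m}$, which I would derive by matching the exact cone expansion on the collar $\{\epsilon<r<3/2\}$ to the bounded bulk solution and using only nonnegativity of the bulk Dirichlet-to-Neumann operator (this bounds the reflected amplitude by the squared transmission factor $q^{2m}$). This forces $\|Q_\epsilon-L_\epsilon\|=O(q)$, and combining the two factors while tracking constants yields $Tr|K_\epsilon|\leq\tfrac{12}{\alpha_1}q$.

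The main obstacle is therefore twofold: in general, the key lemma $\Pi_{\mathcal C}L_\epsilon\to 0$, and for the sharp $k=1$ bound the universal reflection estimate controlling $\|Q_\epsilon-L_\epsilon\|$. Both amount to quantifying how the global geometry of $M_\epsilon$ perturbs the purely conic local model near $\Gamma_\epsilon$, which is precisely the refinement of Proposition \ref{qinverse} that the argument requires.
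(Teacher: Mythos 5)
Your treatment of the $L_0^2\to L_0^2$ and $L_0^2\to\mathcal C$... rather, the $\Pi_0(\cdot)\Pi_0$ and $\Pi_0(\cdot)\Pi_{\mathcal C}$ blocks is correct and is essentially the paper's argument in a slightly different packaging: the identity $\Pi_0(|\mathcal V|+\mathcal V\mathcal T_\epsilon^{-1})\Pi_0=\mathcal V(\mathcal U_\epsilon^-)^{-1}Q_\epsilon\Pi_0$ checks out, and combining the trace-class diagonal factor $\mathcal V(\mathcal U_\epsilon^-)^{-1}$ with $\|L_\epsilon\|\leq 1$ from Proposition \ref{qinverse} gives exactly the power-law bounds the paper obtains by summing the eigenvalues (\ref{evalstwo}) and (\ref{evalsone}) directly. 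You are also right, and more careful than the paper's own wording, that the remaining issue is $\|\Pi_{\mathcal C}L_\epsilon\|\to 0$ on all of $\mathcal C\oplus L_0^2$. The gap is in how you propose to prove that claim. Your flux computation correctly identifies the limit (the vanishing of $\sum_i\alpha_i a_{0,i}^-$ and the matching $a_{0,i}^-\log(1/\epsilon)\to c-\delta_{ij}$ with $c=\alpha_j/\sum_l\alpha_l$), but it takes as input ``the fact that the bulk value of $u_+$ converges to a constant'' --- and that is precisely the substantive assertion, not something available for free. The paper's proof exists to supply exactly this input: boundary data in $\mathcal C$ is uniformly bounded, so the maximum principle bounds $u_+$, Arzela--Ascoli extracts a locally uniform limit on $M$ minus the cone tips, and a bounded harmonic function on $M$ (automatically in the Friedrichs domain) is constant by Liouville. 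Without this compactness step, or a quantitative capacity/energy substitute, your argument for the $\mathcal C$-blocks is circular. The deferral of the $L_0^2\to\mathcal C$ case to ``Green's-theorem and energy estimates'' is likewise a placeholder rather than a proof, although your reduction via the adjoint and the finite dimensionality of $\mathcal C$ is a sound way to set it up.

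For the $k=1$ bound, your route introduces an unproven and unnecessary ingredient. The reflection estimate $|a_m^+|\lesssim q^{2m}$ (equivalently $\|Q_\epsilon-L_\epsilon\|=O(q)$) does not follow from the maximum principle, which only yields $|a_m^+|\lesssim q^{m}$ and hence $\|Q_\epsilon-L_\epsilon\|=O(1)$; proving the stronger statement requires a genuine matched-asymptotics argument against the bulk Dirichlet-to-Neumann operator, which you have sketched but not carried out, and which must also control the off-diagonal mode coupling, not just the ``diagonal entry.'' More to the point, it is not needed: since $Tr|\mathcal V(\mathcal U_\epsilon^-)^{-1}|$ already carries the factor $\epsilon^{1/\alpha_1}$, the crude bound $\|Q_\epsilon-L_\epsilon\|\leq\|Q_\epsilon\|+\|L_\epsilon\|\leq 2$ gives $Tr|K_\epsilon|=O(\epsilon^{1/\alpha_1})$ directly, which is the paper's argument (and is all that is used downstream; the precise constant is immaterial, and indeed your own factorization with $\|Q_\epsilon-L_\epsilon\|\leq 2$ would produce a constant somewhat larger than $12/\alpha_1$ anyway). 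I would drop the reflection estimate, use $\|L_\epsilon\|\leq 1$ as in the general case, and concentrate the effort on an actual proof that $\Pi_{\mathcal C}L_\epsilon\to 0$.
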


\begin{proof} Since $B$ and $B^{-1}$ both have bounded norm, it suffices to analyze $BK_\epsilon$ instead. We compute the trace directly. First analyze the trace of $BK_\epsilon$ on $L_0^2$. As in \cite{w}, the operator $|\mathcal V|+\mathcal V(\mathcal T_\epsilon)^{-1}$ is diagonal on $L_0^2$, with eigenvalues
\begin{equation}\label{evalstwo}
\frac{m}{\alpha_j}\frac{\epsilon^{m/\alpha_j}+\epsilon^{-m/\alpha_j}}
{\epsilon^{m/\alpha_j}-\epsilon^{-m/\alpha_j}}+\frac{|m|}{\alpha_j}=
\frac{2|m|}{\alpha_j}\frac{1}{1-\epsilon^{-2|m|/\alpha_j}}
\end{equation}
for each $j$ between 1 and $k$. The sum over $m$ and $j$ of the absolute value of these eigenvalues converges for all $\epsilon<1$. Moreover, let $\alpha=\min_{j=1}^k\{\alpha_j\}$; this sum times $\epsilon^{-1/\alpha}$ is bounded by
\[\sum_{m=1}^{\infty}\sum_{j=1}^k\frac{4m}{\alpha_j}\epsilon^{\frac{2m-1}{a_j}}.\]
On the other hand, the operator $\mathcal V(\mathcal U_\epsilon^-)^{-1}$ is diagonal on $L_0^2$ with eigenvalues
\begin{equation}\label{evalsone}
\frac{m}{\alpha_j(\epsilon^{m/\alpha_j}-\epsilon^{-m/\alpha_j})};\end{equation}
the sum of the absolute values again converges for all $\epsilon<1$. After multiplying by $\epsilon^{-1/\alpha}$, it is bounded by
\[\sum_{m=1}^{\infty}\sum_{j=1}^k\frac{2m}{a_j}\epsilon^{\frac{m-1}{a_j}}.\]
Since $||L_{\epsilon}||\leq 1$, we conclude that $\epsilon^{-1/\alpha}$ times the trace of $|K_{\epsilon}|$ on $L_0^2$ is bounded by
\[\sum_{m=1}^{\infty}\sum_{j=1}^k(\frac{4m}{\alpha_j}\epsilon^{\frac{2m-1}{a_j}}+\frac{2m}{a_j}\epsilon^{\frac{m-1}{a_j}})\leq\frac{6k}{\alpha}\sum_{m=1}^{\infty}m\epsilon^{\frac{m-1}{a_j}}.\]
When $\epsilon^{1/\alpha}<1/2$, this is bounded by $12k/\alpha$. Since $\mathcal C=\emptyset$ when $k=1$, the second claim in Proposition \ref{ktraceclass} follows immediately, and we also see that $\log(1/\epsilon)$ times the trace of $|K_{\epsilon}|$ on $L_0^2$ approaches zero in general.

It remains to analyze $BK_\epsilon$ on $\mathcal C$; we analyze
\begin{equation}\label{constpart}\log(1/\epsilon)\Pi_{\mathcal C}BK_\epsilon\Pi_{\mathcal C}=\Pi_{\mathcal C}L_\epsilon\Pi_{\mathcal C}.\end{equation}
The operator (\ref{constpart}) is finite-dimensional and hence trace class, and the trace norm is bounded by $(k-1)||\Pi_{\mathcal C}L_\epsilon\Pi_{\mathcal C}||$. It suffices to show that $||\Pi_{\mathcal C}L_\epsilon\Pi_{\mathcal C}||$ goes to zero as $\epsilon$ goes to zero. Suppose not. Then there is a sequence $g_i$ of functions in $\mathcal C$, of norm one in $L^2(\oplus_{i=1}^k S^1)$, and a sequence $\epsilon_i\rightarrow 0$ where $\Pi_{\mathcal C}L_{\epsilon_i} g_{i}$ has $L^2$-norm bounded below. For each $i$, let $u_{+,i}$ be the harmonic function on $M_{\epsilon_i}$ with boundary data $g_{i}$. Since $g_{i}$ has norm one in $L^2(\oplus_{i=1}^k S^1)$ and is piecewise constant, the $L^{\infty}$ norm of $g_{i}$ is bounded by $\frac{1}{2\pi}<1$ for all $i$. By the maximum principle, $u_{+,i}$ is also bounded by 1 for each $i$. Therefore, as $i\rightarrow\infty$, the Arzela-Ascoli theorem implies that a subsequence of $u_{+,i}$ converges uniformly on compact subsets of $M$ away from the conic tip to a limit function $u$. By standard elliptic theory, the limit $u$ is itself harmonic on $M$, and is obviously bounded by 1 (and therefore is in the Friedrichs domain at each conic point). Therefore, $u$ itself is constant, so $h=u|_{\partial M_1}$ is globally constant. However, by construction, passing to the Arzela-Ascoli subsequence, $L_{\epsilon_i}g_i\rightarrow h$ uniformly as $i\rightarrow\infty$. So $\Pi_{\mathcal C}L_{\epsilon_i}g_i\rightarrow\Pi_{\mathcal C} h$ uniformly (and hence strongly in $L^2$), but $\Pi_{\mathcal C} h=0$. This is a contradiction which completes the proof of the proposition.
\end{proof}

Finally, we prove the necessary bounds for $||K_\epsilon A_\epsilon^{-1}||$:

\begin{proposition}\label{pertinvert} The norm $||K_\epsilon A_\epsilon^{-1}||$ approaches zero as $\epsilon\rightarrow 0$. Moreover, when $k=1$, $b_1=1$, and $\epsilon\leq 2^{-\alpha_1}$, $||K_{\epsilon} A_\epsilon^{-1}||\leq 1/2$.
\end{proposition}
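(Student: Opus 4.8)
The plan is to reduce the entire statement to the submultiplicative estimate $||K_\epsilon A_\epsilon^{-1}||\le ||A_\epsilon^{-1}||\cdot ||K_\epsilon||$ and then feed in the two preceding propositions. The only extra input is the elementary fact that the operator norm is dominated by the trace norm, $||K_\epsilon||\le Tr\,|K_\epsilon|$, which lets me replace $||K_\epsilon||$ by the quantity already controlled in Proposition \ref{ktraceclass}.

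For the general statement I would write $||K_\epsilon A_\epsilon^{-1}||\le ||A_\epsilon^{-1}||\cdot Tr\,|K_\epsilon|$ and insert the bound $||A_\epsilon^{-1}||\le ||B^{-1}||\log(1/\epsilon)$ from Proposition \ref{aepsilon}, obtaining \[||K_\epsilon A_\epsilon^{-1}||\le ||B^{-1}||\,\big(\log(1/\epsilon)\,Tr\,|K_\epsilon|\big).\] Since $||B^{-1}||$ is a fixed constant while $\log(1/\epsilon)\,Tr\,|K_\epsilon|\to 0$ by Proposition \ref{ktraceclass}, the right-hand side tends to zero. Note that the logarithmic growth of $||A_\epsilon^{-1}||$ is exactly compensated by the factor $\log(1/\epsilon)$ appearing in Proposition \ref{ktraceclass}; this is precisely why that proposition is phrased in terms of $\log(1/\epsilon)\,Tr\,|K_\epsilon|$ rather than $Tr\,|K_\epsilon|$ alone, and it is what makes the general case essentially immediate.

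For the special case $k=1$, $b_1=1$ the bound $||A_\epsilon^{-1}||=\alpha_1/2$ from Proposition \ref{aepsilon} carries no logarithmic growth, so the crude chain $||K_\epsilon A_\epsilon^{-1}||\le Tr\,|K_\epsilon|\cdot ||A_\epsilon^{-1}||$ already shows the norm is $O(\epsilon^{1/\alpha_1})$ and hence below any fixed constant once $\epsilon$ is small. To pin down the precise threshold I would instead estimate the operator norm directly. Here $\mathcal C$ is empty, so by Proposition \ref{formula} we have $K_\epsilon=\big(|\mathcal V|+\mathcal V(\mathcal T_\epsilon)^{-1}\big)-\mathcal V(\mathcal U_\epsilon^-)^{-1}L_\epsilon$, with $B=Id$. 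The first piece is diagonal with eigenvalues (\ref{evalstwo}), and the second is controlled by the eigenvalues (\ref{evalsone}) together with $||L_\epsilon||\le 1$ from Proposition \ref{qinverse}. Both families are largest in absolute value at the lowest mode $m=1$ and decay geometrically in $m$, so after multiplying by $||A_\epsilon^{-1}||=\alpha_1/2$ and applying the triangle inequality I obtain a bound on $||K_\epsilon A_\epsilon^{-1}||$ that is an explicit increasing function of $x=\epsilon^{1/\alpha_1}$ vanishing as $x\to 0$. The hypothesis $\epsilon\le 2^{-\alpha_1}$ is exactly $x\le 1/2$, and the threshold is chosen so that the resulting constant is driven below $1/2$.

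The main obstacle is the special case rather than the general one: the triangle inequality applied to the two pieces of $K_\epsilon A_\epsilon^{-1}$ is somewhat lossy, so the delicate point is to track the constants in (\ref{evalstwo}) and (\ref{evalsone}) carefully, to verify the monotonicity in $m$ that reduces everything to the mode $m=1$, and possibly to exploit cancellation between the diagonal and off-diagonal contributions, so that the final constant genuinely sits below $1/2$ at $\epsilon=2^{-\alpha_1}$. The general case, by contrast, follows directly once Propositions \ref{aepsilon} and \ref{ktraceclass} are in hand.
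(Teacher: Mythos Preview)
Your approach to the general statement is correct and genuinely more economical than the paper's. The paper expands $BK_\epsilon A_\epsilon^{-1}B^{-1}$ into its four block pieces (the analogue of (\ref{thingone})) and then argues separately that each piece has norm tending to zero, re-using the eigenvalue estimates from Proposition~\ref{ktraceclass} and the Arzel\`a--Ascoli argument for $\Pi_{\mathcal C}L_\epsilon\Pi_{\mathcal C}$. Your route bypasses all of this by chaining $\|K_\epsilon A_\epsilon^{-1}\|\le \|A_\epsilon^{-1}\|\cdot Tr\,|K_\epsilon|$ and feeding in Propositions~\ref{aepsilon} and~\ref{ktraceclass} directly; the point you make, that the $\log(1/\epsilon)$ weighting in Proposition~\ref{ktraceclass} is exactly what is needed to absorb the logarithmic growth of $\|A_\epsilon^{-1}\|$, is precisely why this works with no further effort. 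The trade-off is that the paper's block-by-block argument gives slightly more structural information (for instance, the sharper mode-wise factor $\alpha_1/(2|m|)$ rather than the global bound $\alpha_1/2$), which becomes relevant in the quantitative $k=1$ case.

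For the special case your plan coincides with the paper's: both use that $\mathcal C=\emptyset$, split $K_\epsilon$ into its diagonal part and the $L_\epsilon$ part, and control the resulting expression by the maxima of the eigenvalue families (\ref{evalstwo}) and (\ref{evalsone}). One refinement worth noting: the paper exploits that $A_\epsilon^{-1}$ is itself diagonal, so the diagonal piece of $K_\epsilon A_\epsilon^{-1}$ picks up the factor $\alpha_1/(2|m|)$ mode by mode rather than just the global norm $\alpha_1/2$. This extra $1/|m|$ is what makes the supremum over $m$ cleanly attained at $m=1$ and keeps the constants small; if you only multiply by $\|A_\epsilon^{-1}\|=\alpha_1/2$ you lose a little, and your caveat about needing to ``track the constants carefully'' or ``exploit cancellation'' to land below $1/2$ at the threshold is well placed.
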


\begin{proof} To prove the first claim, observe from the definition of $A_\epsilon$ and $K_\epsilon$ that
\[BK_\epsilon A_\epsilon^{-1}B^{-1}=-\frac{1}{2}\Pi_0(|\mathcal V|+\mathcal V(\mathcal T_{\epsilon})^{-1}-\mathcal V(\mathcal U_\epsilon^-)^{-1}L_\epsilon)|\mathcal V|^{-1}\Pi_0-\frac{1}{2}\Pi_{\mathcal C}(\frac{1}{\log(1/\epsilon)}L_{\epsilon})|\mathcal V|^{-1}\Pi_0\]
\begin{equation}\label{thingone}+\log(1/\epsilon)\Pi_0
(-\mathcal V(\mathcal U^-_{\epsilon})^{-1}L_{\epsilon})\Pi_{\mathcal C}+\Pi_{\mathcal C}L_\epsilon\Pi_{\mathcal C}.\end{equation}
We analyze each term in (\ref{thingone}) separately and show that its norm goes to zero as $\epsilon$ goes to zero. For the first term, note that $|||\mathcal V|^{-1}||$ is bounded by $\alpha=\max_i\{\alpha_i\}$ and $||L_{\epsilon}||\leq 1$. The remaining operators are diagonalized by the $f_{m,j}$, and by the eigenvalue calculations in the proof of Proposition \ref{ktraceclass}, all the eigenvalues go to zero as $\epsilon$ goes to zero; this is more than sufficient. Similarly, the norm of the second term is bounded by $(\alpha/2)(\log(1/\epsilon))^{-1}$, which goes to zero as $\epsilon$ goes to zero.
We again use $||L_\epsilon||<1$ and the eigenvalue calculations from the previous proposition to analyze the third term; the eigenvalues of $\mathcal V(\mathcal U_\epsilon^-)^{-1}$ go to zero even when multiplied by $\log(1/\epsilon)$, which gives the norm bound. Finally, we have already done the necessary analysis for the fourth term in the proof of Proposition \ref{ktraceclass}. This completes the proof of the claim and hence of the general case of the proposition.

As for the $k=1$ and $b_1=1$ bound: in this case, $A_{\epsilon}^{-1}$ is given by multiplication by $-\frac{\alpha_1}{2|m|}$ on each mode. Since $||L_{\epsilon}||\leq 1$, the norm $||K_{\epsilon}A_{\epsilon}^{-1}||$ is bounded by the maximum of $\frac{\alpha_1}{2|m|}$ times (\ref{evalstwo}) plus the maximum of $\frac{\alpha_1}{2|m|}$ times (\ref{evalsone}). Using similar analysis as in the proof of Proposition \ref{ktraceclass}, we see that this is at most
\[\epsilon^{2/\alpha_j}+\frac{1}{2}\epsilon^{1/\alpha_j},\]
which is certainly bounded by $1/2$ when $\epsilon\leq 2^{-\alpha_j}$ (a sharp bound is not necessary). This completes the proof.
\end{proof}

It is now an immediate consequence of the three propositions that (\ref{comeback}) approaches zero as $\epsilon\rightarrow 0$. Moreover, in the $k=1$, $b_1=1$, $\epsilon\leq 2^{-\alpha_1}$ case, we see that (\ref{comeback}) is bounded by $6\epsilon^{1/\alpha_1}$. This completes the proof of Lemma \ref{difference}.
\end{proof}

\end{document}